\newcommand{\C}{\mathbb C}
\newcommand{\D}{\mathbb D}
\newcommand{\E}{\mathbb E}
\newcommand{\G}{\mathbb G}
\renewcommand{\P}{\mathbb P}
\newcommand{\R}{\mathbb R}
\newcommand{\Z}{\mathbb Z}
\newcommand{\abs}[1]{\lvert #1\rvert}
\newcommand{\n}{\lVert}
\newtheorem{defn}{Definition}[section] 
\newtheorem{prop}[defn]{Proposition}
\newtheorem{cor}[defn]{Corollary}
\newtheorem{lemma}[defn]{Lemma}
\newtheorem{thm}[defn]{Theorem}
\newtheorem{rem}[defn]{Remark}
\newtheorem*{MainThm}{Theorem \ref{main result}}
\newtheorem*{thmnonum1}{Theorem \ref{tetra main}}
\newtheorem*{thmnonum2}{Theorem \ref{penta main}}
\newtheorem*{thmnonum3}{Theorem \ref{symm main}}
\begin{document}

\title{The Friedrichs Operator and Circular Domains}

\author[Sivaguru Ravisankar]{Sivaguru Ravisankar}
\address[Sivaguru Ravisankar]{Tata Institute of Fundamental Research Centre for Applicable Mathematics, Bengaluru 560065, India.} \email{sivaguru@tifrbng.res.in}

\author[Samriddho Roy]{Samriddho Roy}
\address[Samriddho Roy]{Theoretical Statistics and Mathematics Unit,
Indian Statistical Institute, Delhi centre, 
New Delhi 110016, India} \email{samriddhoroy@gmail.com}

\keywords{Friedrichs Operator, Bergman Space, Circular Domain, Tetrablock, Pentablock, Symmetrized Polydisc}

\subjclass[2020]{32A36 32Q02 (Primary), 93D21 (Secondary)}

\begin{abstract}
The Friedrichs operator of a domain (in $\C^n$) is closely related to its Bergman projection and encodes crucial information (geometric, quadrature, potential theoretic etc.) about the domain.
We show that the Friedrichs operator of a domain has rank one if the domain can be covered by a circular domain via a proper holomorphic map of finite multiplicity whose Jacobian is a homogeneous polynomial.
As an application, we show that the Friedrichs operator is of rank one on the tetrablock, pentablock, and the symmetrized polydisc -- domains of significance in the study of $\mu$-synthesis in control theory.
\end{abstract}
\date{July 28, 2023}
\maketitle

\section{Introduction}

Let $D\subset\C^n$, $n\ge 1$, be a domain.
The Bergman space of $D$, denoted by $A^2(D)$, is the set of holomorphic functions in $L^2(D)$.
The Friedrichs operator of $D$ is
\begin{equation}
    F\colon A^2(D) \to A^2(D) \text{ defined by } F(g) = B(\bar{g})
\end{equation}
where the Bergman projection $B$ is the orthogonal projection from $L^2(D)$ onto $A^2(D)$.
The Bergman projection and the Friedrichs operator encode crucial information (geometric, quadrature, potential theoretic etc.) about the domain.
For instance, the Friedrichs operator having finite rank translates to the domain satisfying a quadrature identity which, in turn, has close connections to the boundary geometry of the domain.
For more on this, see Friedrichs \cite{Friedrichs} and Shapiro \cite{Shapiro}.

A domain $D$ is said to be {\it circular} if $e^{i\theta}z\in D$ whenever $z\in D$ and $\theta\in\R$.
We first show that the Friedrichs operator of a circular domain containing $0$ has rank one -- see Proposition~\ref{prop:FriedCircRank1}.
This is an easy consequence of a projective representation of circular domains due to Azukawa \cite{Azukawa}.
Our main result in this article is the following generalization which we prove in Section~\ref{Trybula}.
\begin{MainThm}
Let $D_1$ and $D_2$ be bounded domains in $\C^n$, $n\ge 1$, and $\phi\colon D_1\to D_2$ be a proper holomorphic map of finite multiplicity.
If $D_1$ is circular, $0\in D_1$, and $J\phi$ is a homogeneous polynomial, then the Friedrichs operator of $D_2$ is of rank one.
\end{MainThm}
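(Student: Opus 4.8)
The plan is to avoid the Bergman projection and instead work directly with the symmetric bilinear form that encodes the Friedrichs operator, transporting the computation from $D_2$ to the circular model $D_1$ by means of $\phi$. For a bounded domain $D$, put $\beta_D(g,f):=\int_D g f\,dV$ for $g,f\in A^2(D)$. Since $f=B_D f$ for $f\in A^2(D)$, one has $\langle F_D(g),f\rangle_D=\langle \bar g,f\rangle_D=\overline{\beta_D(g,f)}$, so $F_D$ has rank one precisely when $\beta_D$ has rank one; moreover, if $\beta_D(g,f)=\ell(g)\ell(f)$ for a linear functional $\ell$, then $\ell$ is automatically bounded (as $|\ell(g)|^2=|\beta_D(g,g)|\le\|g\|_{L^2(D)}^2$) and $F_D(g)=\overline{\ell(g)}\,\ell^{\sharp}$, where $\ell^{\sharp}\in A^2(D)$ is the Riesz representative of $\ell$. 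It therefore suffices to show that $\beta_{D_2}$ has rank one.

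The first step is to pull the integral over $D_2$ back to $D_1$ via the change-of-variables (area) formula for the finite branched covering $\phi$. For $g,f\in A^2(D_2)$ we have $gf\in L^1(D_2)$, and since $\phi$ is $m$-to-one outside its branch locus (an analytic set, hence of Lebesgue measure zero) with real Jacobian $|J\phi|^2$,
\begin{equation*}
\beta_{D_2}(g,f)=\int_{D_2}gf\,dV=\frac1m\int_{D_1}(g\circ\phi)(f\circ\phi)\,|J\phi|^2\,dV=\frac1m\int_{D_1}h(z)\,\overline{J\phi(z)}\,dV(z),
\end{equation*}
where $h:=(g\circ\phi)(f\circ\phi)\,(J\phi)$ is holomorphic on $D_1$ and the integrals converge absolutely, since $\int_{D_1}|g\circ\phi|\,|f\circ\phi|\,|J\phi|^2\,dV=m\int_{D_2}|gf|\,dV<\infty$.

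Next I would exploit circularity of $D_1$ together with homogeneity of $J\phi$, say $\deg J\phi=d$. The substitution $z\mapsto e^{i\theta}z$ preserves $D_1$ and Lebesgue measure and replaces $\overline{J\phi}$ by $e^{-id\theta}\,\overline{J\phi}$; inserting it in the last integral, averaging over $\theta\in[0,2\pi]$, and interchanging integrals (legitimate by the absolute convergence just noted, since $\int_0^{2\pi}\!\int_{D_1}|h(e^{i\theta}z)|\,|J\phi(z)|\,dV\,d\theta=2\pi\int_{D_1}|h\,\overline{J\phi}|\,dV<\infty$) yields $\beta_{D_2}(g,f)=\frac1m\int_{D_1}h_d(z)\,\overline{J\phi(z)}\,dV(z)$ with $h_d(z):=\frac1{2\pi}\int_0^{2\pi}e^{-id\theta}h(e^{i\theta}z)\,d\theta$. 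Now $h_d$ is holomorphic on $D_1$ and, for $z$ in a small ball about $0$, coincides with the degree-$d$ homogeneous part of the Taylor expansion of $h$ at $0$; since $D_1$ is connected, it equals that polynomial on all of $D_1$. Because $(g\circ\phi)(f\circ\phi)$ has constant term $g(\phi(0))f(\phi(0))$ and $J\phi$ is homogeneous of degree $d$, the degree-$d$ part of their product is $g(\phi(0))f(\phi(0))\,J\phi$, so $h_d=g(\phi(0))f(\phi(0))\,J\phi$ and hence
\begin{equation*}
\beta_{D_2}(g,f)=\frac{\|J\phi\|_{L^2(D_1)}^2}{m}\,g(\phi(0))\,f(\phi(0))=\mathrm{vol}(D_2)\,g(\phi(0))\,f(\phi(0)),
\end{equation*}
the last equality by the area formula applied to the constant function $1$. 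Thus $\beta_{D_2}(g,f)=\ell(g)\ell(f)$ with $\ell(g):=\sqrt{\mathrm{vol}(D_2)}\,g(\phi(0))\not\equiv 0$, so $\beta_{D_2}$, and therefore $F_{D_2}$, has rank one; explicitly $F_{D_2}(g)=\mathrm{vol}(D_2)\,\overline{g(\phi(0))}\,K_{D_2}(\cdot,\phi(0))$, whose range is the line spanned by the Bergman kernel $K_{D_2}(\cdot,\phi(0))$. (Putting $f\equiv 1$ also records the quadrature identity $\int_{D_2}h\,dV=\mathrm{vol}(D_2)\,h(\phi(0))$ for $h\in A^2(D_2)$, and $\phi=\mathrm{id}$ recovers Proposition~\ref{prop:FriedCircRank1}.)

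I do not expect a conceptually hard step; the care is all in the integrability bookkeeping. The one real pitfall is that $g\circ\phi$ and $f\circ\phi$ need not individually belong to $A^2(D_1)$, so one cannot simply rewrite the middle integral as the inner product $\langle h,J\phi\rangle_{D_1}$ and invoke orthogonality of distinct homogeneous parts; the rotation-averaging must instead be performed on the absolutely convergent Lebesgue integral $\int_{D_1}h\,\overline{J\phi}\,dV$, which is exactly where the estimate $\int_{D_1}|g\circ\phi|\,|f\circ\phi|\,|J\phi|^2\,dV=m\int_{D_2}|gf|\,dV$ and the accompanying Fubini argument do the work. It is also worth remembering that a circular domain need not be balanced, so the homogeneous decomposition of $h$ has to be extracted through the circle action $z\mapsto e^{i\theta}z$ (which does map $D_1$ onto itself) rather than through dilations, and that $h_d$ is a priori only defined by the averaging integral and is identified with a polynomial by analytic continuation.
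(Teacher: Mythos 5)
Your proof is correct, and it takes a genuinely different route from the paper's. The paper works through the weighted Bergman space $A^2(D_1,\nu)$ with $\nu=\abs{J\phi}^2$: it builds the isometry $\Gamma_\nu f=\frac{1}{\sqrt m}(f\circ\phi)$, proves a projection formula and a kernel transformation formula involving the local inverses $\phi^j$, deduces a transformation law for the Bergman projections (Lemma~\ref{bergman projection}), and then kills all nonconstant homogeneous components of $\bar f$ using the weighted analogue of Lemma~\ref{lemma:circular-Azukawa}, concluding via Corollary~\ref{cor:FriedRank1}. You bypass all of that machinery by observing that $\langle F_{D_2}(g),f\rangle=\overline{\int_{D_2}gf\,dV}$, pulling the bilinear form back to $D_1$ with the area formula, and extracting the degree-$d$ component of $(g\circ\phi)(f\circ\phi)J\phi$ by rotation-averaging the absolutely convergent integral against $\overline{J\phi}$; the underlying mechanism (circle invariance of $D_1$ plus homogeneity of $J\phi$ forces everything but the constant term to integrate to zero) is the same, but your packaging is more elementary and self-contained. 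Your version also buys something the paper's does not state: the explicit one-point quadrature identity $\int_{D_2}h\,dV=\mathrm{vol}(D_2)\,h(\phi(0))$ and the closed form $F_{D_2}(g)=\mathrm{vol}(D_2)\,\overline{g(\phi(0))}\,K_{D_2}(\cdot,\phi(0))$. What the paper's longer route buys in exchange is the reusable transformation theory for weighted Bergman kernels and projections under proper maps (Lemmas~\ref{Try-projection}--\ref{bergman projection}), which is of independent interest. Your cautionary remarks are well placed: $g\circ\phi$ need not lie in $A^2(D_1)$ individually (only $(g\circ\phi)J\phi$ does), so one cannot naively invoke orthogonality of homogeneous components in $A^2(D_1)$, and a circular domain need not be balanced, so the homogeneous part must indeed be produced by the circle action and identified with a polynomial by analytic continuation from a ball about the origin.
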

The approach of using a covering domain to study the Bergman projection and related objects has its origins in the work of Misra, Roy, and Zhang \cite{GM13} and has been refined further by Trybu{\l}a \cite{Try13}.
We build on results of Azukawa \cite{Azukawa}, to show that the Friedrichs operator associated to a certain weighted Bergman space on  circular domain has rank one.
We then prove generalizations of some results of Trybu{\l}a which allow us to conclude our main result.

Using the above result we show the Friedrichs operator has rank one on many domains that are of significance in the $\mu$-synthesis problem from control theory.
For a brief description of $\mu$-synthesis see \S\ref{subsec:muSynth}.
In some cases, the $\mu$-synthesis problem reduces to an interpolation problem from $\D$ to certain special domains.
Some examples of these special domains are the tetrablock, pentablock, and symmetrized polydisc.
Chen, Krantz, and Yuan \cite{Krantz-Chen} show that the Friedrichs operator of the symmetrized bidisc $\G_2$ is of rank one.
Their proof relies crucially on covering $\G_2$ by $\D^2$, a Reinhardt domain.
By allowing for covering domains to be circular (possibly non-Reinhardt), we prove the following in Section~\ref{sec:FriedrichsOnMuDomains}.
\begin{thmnonum1}
The Friedrichs operator of the Tetrablock $\E\subset\C^3$ has rank one.
\end{thmnonum1}
\begin{thmnonum2}
The Friedrichs operator of the Pentablock $\mathcal{P}\subset\C^3$ has rank one.
\end{thmnonum2}
\begin{thmnonum3}
The Friedrichs operator of the Symmetrized polydisc $\G_n\subset \C^n$, $n\ge 2$, has rank one.
\end{thmnonum3}
We now begin with a quick overview of Bergman spaces, circular domains, and Bergman spaces of circular domains.


\section{Bergman Spaces and Circular Domains}\label{Bergman theory}

Let $D \subset \mathbb C^n$ be a bounded domain, $dV$ denote the Lebesgue measure in $\mathbb C^n$, and $L^2(D)$ denote the Hilbert space of square-integrable functions with the inner product
\begin{equation}
\langle f,g \rangle = \int_{D}^{} f(z) \overline{g(z)} dV(z).
\end{equation}
The subspace of $L^2(D)$ consisting of holomorphic functions is called as the {\it Bergman space} of $D$ and we denote it by $A^2(D)$. 

The Bergman space $A^2(D)$ is a closed subspace of $L^2(D)$ and this induces an orthogonal projection from $L^2(D)$ onto $A^2(D)$ known as the {\it Bergman projection} of $D$, denoted by $B_{D}$.
The Freidrichs operator of $D$ is defined as $F_{D}\colon A^2(D) \to A^2(D)$, $F_{D}(g) = B_{D}(\bar g)$.

The Bergman projection is an integral operator given by
\begin{equation}
B_{D}(f)(z) =  \int_{D} K_D (z,w)f(w) dV(w),    
\end{equation}
where the integral kernel $K_D(z,w)$ is known as the {\it Bergman kernel} of $D$.
We will drop the subscripts when the domain under consideration is clear from context.
The Bergman kernel $K\colon D \times D \to \C$ is a reproducing kernel for the Bergman space and satisfies the following properties:
\begin{enumerate}\itemsep1mm
    \item $k_w:=K(\cdot , w) \in A^2(D)$ for all $w \in D$,
    \item $\langle f, k_w \rangle = f(w)$ for all $f \in A^2(D)$ and $w \in D$, and
    \item if $\{e_n\}$ is an orthonormal basis for $A^2(D)$, then
\begin{equation}
K(z,w) = \sum\limits e_n(z) \overline{e_n(w)}.
\end{equation}
\end{enumerate}

A domain $D\subset\C^n$ is said to be {\it circular} if $e^{i\theta}z\in D$ for every $z\in D$ and $\theta\in\R$.
Circular domains admit a characterization in terms of projective coordinates that is useful.
Let $D\subset\C^n$ be a circular domain.
Define
\begin{equation}
    V=\{(\zeta,r)\in\C\P^{n-1}\times\R_{\ge 0}\colon r\psi(\zeta)\in D\}
\end{equation}
where $\psi\colon\C\P^{n-1}\to S^{2n-1}$ is such that $\pi\circ\psi = \operatorname{Id}_{\C\P^{n-1}}$ and $\pi\colon\C^n\setminus\{0\}\to\C\P^{n-1}$ is the canonical projection.
The set $V$ is independent of the choice of $\psi$ and is called the {\it representative domain} for $D$.
The domain $D$ can be recovered from $V$ as follows:
\begin{equation}
    D=\{re^{i\theta}\psi(\zeta) \colon (\zeta,r)\in V,\theta\in\R\}.
\end{equation}

The Bergman space of a circular domain admits a decomposition in terms of homogeneous polynomials.
A holomorphic function $f$ on $D$ is said to be {\it $k$-homogeneous}, for $k\in\Z$, if $f(\lambda z)=\lambda^k f(z)$ for all $z\in D$ and $\lambda\in\C$ with $\abs{\lambda}\in I(z)$ where $I(z)$ is the connected component of the set $\{r\in\R\colon r\ne 0, rz\in D\}$ that contains $1$.
Let $H_k(D)$ denote the set of holomorphic functions on $D $ that are $k$-homogeneous.
If $f$ is holomorphic in $D$, then $f=\sum_{k\in\Z}f_k$, with $f_k\in H_k$, and the series converges uniformly on compact subsets of $D$ (see \cite{Azukawa}*{Lemma 1.3}).

The following lemma is an expanded version of \cite{Azukawa}*{Lemma 1.2}.
To state this lemma we need to express integrals on $D$ as integrals on $V$ using the Fubini-Study metric in $\C\P^{n-1}$.
Let $U=\{\pi(z_1,\ldots,z_n)\colon z_n\ne 0\}$.
Let $u_j\colon U\to \C$, for $1\le j\le n-1$, be defined by $u_j(\pi(z))=z_j/z_n$, and $u=(u_1,\ldots,u_{n-1})$.
Let $v$ denote the volume element on $\C\P^{n-1}$ associated to the Fubini-Study metric.
Then,
\begin{equation}\label{eqn:FSandEucVol}
    v\vert_U = \left(1+\abs{u}^2\right)^{-n}u^{\ast} dV_{n-1}
\end{equation}
where $dV_{n-1}$ is the volume element in $\C^{n-1}$.
Let $\alpha\colon U\to S^{2n-1}$ be given by
\begin{equation}
    \alpha = \left(1+\abs{u}^2\right)^{-1/2}(u_1,\ldots,u_{n-1},1).
\end{equation}

\begin{lemma}\label{lemma:circular-Azukawa}
	Let $D\subset\C^n$ be a circular domain with representative domain $V\subset\C\P^{n-1}\times\R_{\ge 0}$.
	\begin{enumerate}[label=(\alph*)]
	    \item For $f,g\in A^2(D)$,
	    \begin{equation}
	        \langle f,g\rangle = \int\limits_0^{2\pi}\int\limits_{(\zeta,r)\in V,\zeta\in U}\,f\big(r\alpha(\zeta)e^{i\theta}\big)\overline{g\big(r\alpha(\zeta)e^{i\theta}\big)}r^{2n-1}v(\zeta)\wedge dr\wedge d\theta.
	    \end{equation}
	   \item For $f \in H_k$, $g \in H_\ell$, and $k \neq \ell$, we have $\langle f,g \rangle =0$.
	   \item For $f\in A^2(D)$, let $f=\sum_{k\in\Z} f_k$, with $f_k\in H_k$, be the homogeneous expansion.
	   Then, $f_k\in A^2(D)$ for each $k$.
	   \item $A^2(D) = \bigoplus\limits_{k\in\Z}^{} H_k\cap A^2(D)$.
\end{enumerate}
\end{lemma}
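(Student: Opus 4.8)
The plan is to derive all four parts from the integration formula in part~(a), which is the only place where genuine computation is required; parts~(b)--(d) then follow by routine Fourier-series and Hilbert-space arguments. For part~(a), I would parametrize $D$, up to a Lebesgue-null set, by the map $\Phi(\zeta,r,\theta)=re^{i\theta}\alpha(\zeta)$ with $\zeta\in U$, $r>0$, $\theta\in[0,2\pi)$, working in the affine coordinates $u=(u_1,\ldots,u_{n-1})$ on $U$. First I would check that, on $\{(\zeta,r)\in V,\ \zeta\in U\}\times[0,2\pi)$, the map $\Phi$ is injective with image $\{z\in D:z_n\neq 0\}$ (using circularity of $D$ to pass freely between $\alpha$ and the section $\psi$ defining $V$): comparing moduli recovers $r$; the last coordinate $re^{i\theta}(1+\abs{u}^2)^{-1/2}$ is a positive multiple of $e^{i\theta}$, so it recovers $\theta$ and $\abs{u}$; and the remaining coordinates then recover $u$. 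Both $\{z_n=0\}$ and $\C\P^{n-1}\setminus U$ are negligible for the relevant measures. The heart of~(a) is the Jacobian, which I would compute by factoring $\Phi$ as $(u,r,\theta)\mapsto(u,\rho,\theta)\mapsto(u,\xi)\mapsto(\xi u_1,\ldots,\xi u_{n-1},\xi)$, where $\rho=r(1+\abs{u}^2)^{-1/2}$ and $\xi=\rho e^{i\theta}$: the first map has block lower-triangular derivative with determinant $(1+\abs{u}^2)^{-1/2}$; the second is the polar-coordinate map in the $\xi$-plane (with $u$ a parameter), of Jacobian $\rho$; the third is holomorphic with complex Jacobian $\xi^{n-1}$, hence real Jacobian $\abs{\xi}^{2n-2}=\rho^{2n-2}$. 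Multiplying and substituting back yields real Jacobian $\rho^{2n-1}(1+\abs{u}^2)^{-1/2}=r^{2n-1}(1+\abs{u}^2)^{-n}$. Since $\nu|_U=(1+\abs{u}^2)^{-n}\,u^{\ast}dV_{n-1}$, this says exactly that $\Phi^{\ast}(dV)=r^{2n-1}\,\nu\wedge dr\wedge d\theta$, and changing variables in $\int_D f\overline{g}\,dV$ gives the stated formula.

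Part~(b) is then immediate: for $z\in D$ one has $1\in I(z)$, so $f(e^{i\theta}z)=e^{ik\theta}f(z)$ and $\overline{g(e^{i\theta}z)}=e^{-i\ell\theta}\overline{g(z)}$; putting $z=r\alpha(\zeta)$ into~(a), the integrand carries a factor $e^{i(k-\ell)\theta}$, which integrates to $0$ over $\theta\in[0,2\pi]$ when $k\neq\ell$. For part~(c), since the homogeneous expansion converges uniformly on the compact orbit $\{e^{i\theta}z:\theta\in\R\}\subset D$, integrating term by term gives $f_k(z)=\frac{1}{2\pi}\int_0^{2\pi}f(e^{i\theta}z)e^{-ik\theta}\,d\theta$; in particular $f_k$ is holomorphic on $D$. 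By Cauchy--Schwarz in $\theta$, $\abs{f_k(z)}^2\le\frac{1}{2\pi}\int_0^{2\pi}\abs{f(e^{i\theta}z)}^2\,d\theta$, and integrating over $D$ and using the circular invariance of Lebesgue measure gives $\|f_k\|\le\|f\|$, so $f_k\in A^2(D)$.

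For part~(d), I would first verify that $P_k\colon A^2(D)\to A^2(D)$, $P_kf=f_k$ (the averaging operator from~(c)), is an orthogonal projection onto $H_k\cap A^2(D)$: it is self-adjoint by a Fubini argument that again uses the circular invariance of $dV$, and $P_kP_\ell=\delta_{k\ell}P_k$ because $P_\ell f$ is $\ell$-homogeneous. Hence $\sum_{\abs{k}\le N}P_k$ is an orthogonal projection for every $N$, so $\sum_{\abs{k}\le N}\|f_k\|^2=\|\sum_{\abs{k}\le N}f_k\|^2\le\|f\|^2$; therefore $\sum_k\|f_k\|^2<\infty$ and $\sum_k f_k$ converges in $L^2$ to some $g\in A^2(D)$. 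Passing to a subsequence of partial sums convergent almost everywhere and comparing with the uniform-on-compacts convergence $\sum_k f_k=f$ forces $g=f$, which together with~(b) gives $A^2(D)=\bigoplus_{k\in\Z}H_k\cap A^2(D)$. The only genuine obstacle here is part~(a): organizing the Jacobian computation cleanly and confirming that the exceptional sets $\{z_n=0\}$ and $\C\P^{n-1}\setminus U$ can be discarded; once~(a) is in hand, (b)--(d) are standard.
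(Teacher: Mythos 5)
The paper offers no proof of this lemma: it is quoted from Azukawa (Lemmas 1.2 and 1.3 of \cite{Azukawa}), so there is no in-paper argument to compare against. Your reconstruction is correct and is essentially Azukawa's own route: part (a) by the projective polar coordinates $(u,r,\theta)\mapsto re^{i\theta}\alpha(u)$, whose real Jacobian you correctly compute as $r^{2n-1}(1+\abs{u}^2)^{-n}$ (matching the Fubini--Study factor in $\nu\vert_U$), with the exceptional sets $\{z_n=0\}$ and $\C\P^{n-1}\setminus U$ rightly discarded as null; parts (b)--(d) then follow from Fourier analysis in $\theta$, the averaging projections $P_k$, and the standard identification of the $L^2$-sum of the homogeneous components with $f$ via an a.e.-convergent subsequence. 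I see no gaps; the one point worth stating explicitly in (b) is that $f\bar g\in L^1(D)$ is needed for the Fubini step, which is implicit in the hypothesis that the inner product is defined.
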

As an easy consequence of the above lemma, we get that the Friedrichs operator of a circular domain containing the origin is of rank one.
\begin{prop}\label{prop:FriedCircRank1}
	Let $D\subset\C^n$ be a circular domain with $0\in D$.
	Then, the Friedrichs operator of $D$ is of rank one.
\end{prop}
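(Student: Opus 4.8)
The plan is to read off the rank of $F_D$ directly from the homogeneous decomposition in Lemma~\ref{lemma:circular-Azukawa}, using $0\in D$ to pin down the lowest-degree piece. First I would sharpen Lemma~\ref{lemma:circular-Azukawa}(d): since $D$ is bounded and $0\in D$, every $f\in A^2(D)$ is holomorphic in a neighbourhood of the origin, so its homogeneous expansion $f=\sum_{k\in\Z}f_k$ is the regrouping, by degree, of the Taylor series of $f$ at $0$; hence $f_k=0$ for all $k<0$ and $f_0$ is a constant. Thus
\begin{equation}
A^2(D)=\bigoplus_{k\ge 0}\bigl(H_k\cap A^2(D)\bigr),\qquad H_0\cap A^2(D)=\C\cdot 1,
\end{equation}
where $1$ denotes the constant function; note $1\in A^2(D)$ because $D$ is bounded.

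Next, fix $g\in A^2(D)$ and write $g=\sum_{k\ge 0}g_k$ with $g_k\in H_k\cap A^2(D)$ (by Lemma~\ref{lemma:circular-Azukawa}(c),(d) the series converges in $A^2(D)$). Then $\bar g=\sum_{k\ge 0}\overline{g_k}$ converges in $L^2(D)$, and since the Bergman projection $B_D$ is bounded,
\begin{equation}
F_D(g)=B_D(\bar g)=\sum_{k\ge 0}B_D\bigl(\overline{g_k}\bigr).
\end{equation}
I claim $B_D(\overline{g_k})=0$ for every $k\ge 1$; equivalently, $\langle\overline{g_k},h\rangle=0$ for all $h\in A^2(D)$, and by the decomposition above it suffices to take $h=h_\ell\in H_\ell\cap A^2(D)$ for each $\ell\ge 0$. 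Here $\langle\overline{g_k},h_\ell\rangle=\overline{\int_D g_k(z)h_\ell(z)\,dV(z)}$, and $p:=g_kh_\ell$ is a holomorphic, $(k+\ell)$-homogeneous function lying in $L^1(D)$ by Cauchy--Schwarz. Applying the rotation $z\mapsto e^{i\theta}z$, a volume-preserving self-map of the circular domain $D$, gives $\int_D p\,dV=e^{i(k+\ell)\theta}\int_D p\,dV$ for every $\theta$; since $k+\ell\ge 1$ this forces $\int_D p\,dV=0$, proving the claim. For $k=0$, the function $g_0$, hence $\overline{g_0}$, is a constant, so $\overline{g_0}\in A^2(D)$ and $B_D(\overline{g_0})=\overline{g_0}$.

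Combining these, $F_D(g)=\overline{g_0}$, where $g_0$ is the orthogonal projection of $g$ onto the line $H_0\cap A^2(D)=\C\cdot 1$, i.e.\ $g_0=\bigl(\tfrac{1}{V(D)}\int_D g\,dV\bigr)\,1$ with $V(D)=\lVert 1\rVert^2$. Hence
\begin{equation}
F_D(g)=\Bigl(\frac{1}{V(D)}\int_D\overline{g(z)}\,dV(z)\Bigr)\,1,
\end{equation}
so the range of $F_D$ is contained in the one-dimensional space $\C\cdot 1$, and it equals $\C\cdot 1$ since $F_D(1)=1\ne 0$; thus $F_D$ has rank exactly one. (Alternatively, as $z\mapsto e^{i\theta}z$ has constant Jacobian, $B_D$ commutes with composition by it; applied to $\overline{g_k}$ this forces $B_D(\overline{g_k})$ to be a holomorphic $(-k)$-homogeneous function on $D$, hence $0$ when $k\ge 1$.)

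I do not anticipate a genuine obstacle: the statement is essentially bookkeeping built on Lemma~\ref{lemma:circular-Azukawa}. The points needing a little care are the sharpened decomposition above (that holomorphy at $0$ annihilates the negative-degree pieces and identifies $H_0\cap A^2(D)$ with the constants), the legitimacy of the homogeneity identity under the rotation at the relevant points (so the $\theta$-averaging applies), and the routine interchange of $B_D$ with the $L^2$-convergent series $\sum_k\overline{g_k}$.
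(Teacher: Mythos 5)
Your proof is correct and follows essentially the same route as the paper: expand $f=\sum_{k\ge 0}f_k$ into homogeneous pieces (with $k\ge 0$ because $0\in D$), kill every term $B_D(\bar f_k)$ with $k\ge 1$ by orthogonality of $\bar f_k$ against all of $A^2(D)$, and observe that only the constant $f_0$ survives. The only cosmetic difference is that you re-derive the orthogonality $\langle\bar f_k,h_\ell\rangle=0$ directly from rotation invariance of $D$, whereas the paper reads it off from Lemma~\ref{lemma:circular-Azukawa}; your added care about $L^2$-convergence of the series and about the rank being exactly one is welcome but not a change of method.
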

\begin{proof}
	For $f \in A^2(D)$, $f=\sum_{k \ge 0} f_k$, where $f_k \in A^2(D)$ is $k$-homogeneous.
	Note that $k\ge 0$ in this expansion since $0\in D$.
	Let $F_D$ be the Friedrichs operator of $D$. Then, 
	\begin{equation}
	 F_D(f) = B_D(\bar f) = B_D\big(\sum\limits_{k} \bar{f}_k\big) = \sum\limits_{k} B_D(\bar{f}_k).
	\end{equation}
	For $k>0$, Lemma \ref{lemma:circular-Azukawa} gives us that $\langle \bar{f}_k,g_\ell\rangle = 0$ for all $g_\ell\in H_\ell$ and $\ell\ge 0$.
	So, $\langle \bar{f}_k,g\rangle = 0$ for all $g\in A^2(D)$.
	Since $f_0$ is a constant, $F_D(f)=B_D(\bar{f}_0)$ and the conclusion follows.
\end{proof}
If the circular domain $D$ does not contain the origin, the Friedrichs operator can have arbitrary finite rank or infinite rank.
To realize the former possibility consider the {\it fat} Hartogs triangle $\{\abs{z}^\gamma < \abs{w} < 1\}$,  $\gamma >0$, and for the latter consider a product of annuli centred at the origin.
For more on these examples and related ideas see Ravisankar and Zeytuncu \cite{RZ}.


\section{Proper holomorphic mappings and the Friedrichs operator}\label{Trybula}
In this section we present the relationship between the Bergman projection of a domain and its image under a proper holomorphic map of finite multiplicity.
This, in turn, leads to a relationship between their Friedrichs operators.

Let $D_1$ and $D_2$ be two bounded domains in $\mathbb C^n$, $n\ge 1$, and $\phi: D_1 \rightarrow D_2$ be a proper holomorphic mapping with multiplicity $m$.
Trybula \cite{Try13} has shown that there is a closed subspace of $A^2(D_1)$ which is unitarily isomorphic to $A^2(D_2)$ (see also \cite{GM13}).
Let $J\phi$ denote the complex Jacobian of  $\phi$ and $\nu(z) = |J\phi(z)|^2$.
Let the weighted Bergman space $A^2(D_1, \nu)$ be the set of holomorphic functions in $L^2(D_1, \nu)$ equipped with the inner product $\langle f,g \rangle_\nu = \int_{D_1}^{} f(z) \overline{g(z)} \nu(z) dV(z)$.
We adopt the same strategy as Trybula \cite{Try13} to generalize their results to our setting.
We show that there is a closed subspace of $A^2(D_1, \nu)$ which is unitarily isomorphic to $A^2(D_2)$.
For $f \in A^2(D_2)$, $f \circ \phi$ is well defined and holomorphic on $D_1$ and, by change of variables,
 \begin{equation}\label{cov}
 m \int\limits_{D_2}^{} f dv = \int\limits_{D_1}^{} (f \circ \phi) |J\phi|^2 dv.
 \end{equation}	
Thus $f \circ \phi  \in A^2(D_1, \nu)$.
Define $\Gamma_{\nu} : A^2(D_2) \longrightarrow A^2(D_1, \nu)$ by $\Gamma_{\nu}(f) = \dfrac{1}{\sqrt{m}} (f \circ \phi)$. Clearly $\Gamma_{\nu}$ is an isometric embedding. Therefore $\Gamma_{\nu} A^2(D_2)$ is a closed subspace of $A^2(D_1, \nu)$ that is isometrically isomorphic to $A^2(D_2)$.

Note that $\Gamma_{\nu}$ is unitary when understood as an operator from $A^2(D_2)$ onto $\Gamma_{\nu} A^2(D_2)$.
The adjoint operator $\Gamma_{\nu}^\ast$ can be described as follows.
Let $g \in \Gamma_{\nu} A^2(D_2)$.
Then, $g(z) = g(w)$ whenever $\phi(z) = \phi(w)$ and $z,w\in D_1$.
So, we can define $\widetilde g$ on $D_2$ by $\widetilde{g}(\phi(z)) = g(z)$.
Then, $\widetilde g$ is well defined and holomorphic on $D_2$.
It is easy to verify, using \eqref{cov}, that $\widetilde g \in A^2(D_2)$. Hence,  \begin{equation}\label{adjoint}
 \Gamma_{\nu}^\ast(g) = \sqrt{m}\,\widetilde{g}, \; \text{ for } g \in \Gamma_{\nu} A^2(D_2).
\end{equation}
 
 \begin{rem}\label{rem:img Gamma_nu}
 For $g \in \Gamma_{\nu} A^2(D_2)$, $\widetilde{g} \in A^2(D_2)$ and $\widetilde{g} \circ \phi = g$.
 Therefore $\Gamma_{\nu}^\ast = \Gamma_{\nu}^{-1}$ (where $\Gamma_\nu$ is considered a map onto its range) and
  \begin{equation}
 \Gamma_{\nu} A^2(D_2) = \{ g \in A^2(D_1, \nu) \colon g(z) = g(w) \text{ whenever } \phi(z) = \phi(w) \text{ and } z,w \in D_1 \}.
 \end{equation}
 \end{rem}
 
The following two lemmas express the Bergman kernel of $D_2$ in terms of the weighted Bergman kernel of $D_1$.
 \begin{lemma}\label{Try-projection}
 	The orthogonal projection $P_{\nu}$ of $A^2(D_1, \nu)$ onto $\Gamma_{\nu} A^2(D_2)$ is given by 
 	\begin{equation}\label{projection formula}
 	P_{\nu}g = \dfrac{1}{m} \sum\limits_{j=1}^{m} (g \circ \phi^j \circ \phi), \; \text{ for } g \in A^2(D_1, \nu),
 	\end{equation}
 	where $\{\phi^j\}_{1}^{m}$ are the local inverses of $\phi$.
 \end{lemma}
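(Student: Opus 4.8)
The plan is to verify directly that the operator $Q$ given by the right-hand side of \eqref{projection formula} is a well-defined bounded operator on $A^2(D_1,\nu)$ with range $\Gamma_\nu A^2(D_2)$, that $Q$ restricts to the identity on $\Gamma_\nu A^2(D_2)$, and that $g-Qg$ is $\langle\cdot,\cdot\rangle_\nu$-orthogonal to $\Gamma_\nu A^2(D_2)$ for every $g$. These three facts together identify $Q$ with the orthogonal projection $P_\nu$.

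First I would set $Z=\phi(\{J\phi=0\})\subset D_2$. Since $\phi$ is proper and holomorphic, $Z$ is a proper analytic subvariety of $D_2$; hence it has measure zero, $D_2\setminus Z$ is connected, and over $D_2\setminus Z$ the map $\phi$ is an unbranched $m$-sheeted holomorphic covering with local inverses $\phi^1,\dots,\phi^m$ (well-defined up to relabelling). For $g\in A^2(D_1,\nu)$ put $g^\sharp:=\frac1m\sum_{j=1}^m g\circ\phi^j$ on $D_2\setminus Z$; this is holomorphic there, and being symmetric in the branches it is a single well-defined function on $D_2\setminus Z$. From $|g^\sharp|^2\le\frac1m\sum_j|g\circ\phi^j|^2$, summing over the fibres of $\phi$ and changing variables, one gets $\int_{D_2\setminus Z}|g^\sharp|^2\,dV\le\frac1m\int_{D_1}|g|^2|J\phi|^2\,dV=\frac1m\|g\|_\nu^2<\infty$. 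Thus $g^\sharp$ is holomorphic and square-integrable on $D_2\setminus Z$; since a proper analytic subvariety is negligible for Bergman spaces (a Riemann-type removable singularity), $g^\sharp$ extends to an element of $A^2(D_2)$. Consequently $Qg=g^\sharp\circ\phi=\frac1m\sum_j g\circ\phi^j\circ\phi$ is holomorphic on $D_1$, and by the change of variables \eqref{cov} applied to $|g^\sharp|^2$ we obtain $\|Qg\|_\nu^2=m\|g^\sharp\|_{A^2(D_2)}^2\le\|g\|_\nu^2$; moreover $Qg=\Gamma_\nu(\sqrt m\,g^\sharp)\in\Gamma_\nu A^2(D_2)$. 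Hence $Q$ is a bounded operator on $A^2(D_1,\nu)$ with range contained in $\Gamma_\nu A^2(D_2)$.

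Next, if $g\in\Gamma_\nu A^2(D_2)$, say $g=\widetilde g\circ\phi$ with $\widetilde g\in A^2(D_2)$ as in Remark~\ref{rem:img Gamma_nu}, then $g\circ\phi^j=\widetilde g$ on $D_2\setminus Z$ for every $j$, so $g^\sharp=\widetilde g$ and $Qg=\widetilde g\circ\phi=g$. Therefore $Q$ is the identity on $\Gamma_\nu A^2(D_2)$, which together with the previous step shows $Q^2=Q$ and $\operatorname{ran}Q=\Gamma_\nu A^2(D_2)$. It then remains to check that $g-Qg\perp\Gamma_\nu A^2(D_2)$ in $A^2(D_1,\nu)$. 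Fixing $h=\psi\circ\phi$ with $\psi\in A^2(D_2)$, I would use \eqref{cov}, then the definition of $g^\sharp$, then sum over the fibres of $\phi$ and change variables back to $D_1$:
\begin{align*}
\langle Qg,h\rangle_\nu
&=\int_{D_1}(g^\sharp\circ\phi)\,\overline{\psi\circ\phi}\,|J\phi|^2\,dV
=m\int_{D_2}g^\sharp\,\overline{\psi}\,dV\\
&=\sum_{j=1}^m\int_{D_2\setminus Z}(g\circ\phi^j)\,\overline{\psi}\,dV
=\int_{D_1}g\,\overline{\psi\circ\phi}\,|J\phi|^2\,dV=\langle g,h\rangle_\nu .
\end{align*}
Hence $\langle g-Qg,h\rangle_\nu=0$, and since $Qg\in\Gamma_\nu A^2(D_2)$ this proves $Q=P_\nu$.

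The main obstacle is the removable-singularity step: the local inverses $\phi^j$, and therefore the expression defining $Qg$, make sense only away from the critical locus, and elements of $A^2(D_1,\nu)$ need not be bounded, so one cannot invoke the classical Riemann extension theorem pointwise. The fix is the $L^2$-bound obtained from the fibrewise change of variables together with the fact that proper analytic subvarieties are removable for $L^2$ holomorphic functions; this is exactly the technical point in Trybu\l a~\cite{Try13}, and the only new feature here is the weight $\nu=|J\phi|^2$, whose presence is precisely what makes \eqref{cov} turn the weighted space $A^2(D_1,\nu)$ into the unweighted space $A^2(D_2)$ under $g\mapsto g^\sharp$.
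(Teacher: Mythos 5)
Your proof is correct, and it reaches the conclusion by a genuinely different final step than the paper. Both arguments define $Q$ by the right-hand side of \eqref{projection formula}, show $\|Qg\|_\nu\le\|g\|_\nu$ via Cauchy--Schwarz and the fibrewise change of variables, and show that $Q$ is idempotent with range exactly $\Gamma_\nu A^2(D_2)$. The paper then finishes abstractly: an idempotent of norm one on a Hilbert space is automatically the orthogonal projection onto its range. You instead verify orthogonality by hand, computing $\langle Qg,h\rangle_\nu=\langle g,h\rangle_\nu$ for $h=\psi\circ\phi$ in the subspace by pushing the integral down to $D_2$ and summing over the fibres; that computation is correct and makes the self-adjointness of $Q$ explicit rather than inferring it from the norm bound. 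A second genuine difference is that you take care of a point the paper's proof glosses over: the local inverses $\phi^j$ only exist off the branch locus, so the holomorphy of $Qg$ on all of $D_1$ (equivalently, the extension of the symmetrization $g^\sharp$ across $\phi(\{J\phi=0\})$) requires the $L^2$ removable-singularity argument you supply; elements of $A^2(D_1,\nu)$ need not be bounded, so the classical Riemann extension theorem does not apply directly. The paper's route is shorter and leans on a standard functional-analytic fact; yours is more self-contained at the level of measure-theoretic detail and produces the intertwining identity $Qg=\Gamma_\nu(\sqrt m\,g^\sharp)$ as a byproduct, which is essentially the adjoint formula \eqref{adjoint}.
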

 \begin{proof}
 	For $g \in A^2(D_1, \nu)$, let $Qg$ denote the
 	right hand side of \eqref{projection formula}.
    Note that $Qg \in A^2(D_1, \nu)$ since $Qg$ is holomorphic and
 	\begin{align}
 	\n Qg \n_{(D_1, \nu)} &= \dfrac{1}{m^2} \int\limits_{D_1}^{} \abs{\sum\limits_{j=1}^{m}(g \circ \phi^j \circ \phi)}^2 |J\phi|^2 dv\\
 	& \leq \dfrac{1}{m} \int\limits_{D_1}^{} \sum\limits_{j=1}^{m} |(g \circ \phi^j \circ \phi)|^2 |J\phi|^2 dv = \n g \n_{(D_1, \nu)}.
 	\end{align}
 	Using $\phi \circ \phi^j \circ \phi = \phi$, it is easy to verify that $Q^2 = Q$ and $Q \circ \Gamma_{\nu} = \Gamma_{\nu}$.
 	Since $Qg(z)=Qg(w)$ whenever $\phi(z)=\phi(w)$ and $z,w\in D_1$, we have $\widetilde{Qg}\in A^2(D_2)$ with $\widetilde{Qg}\big(\phi(z)\big) = Qg(z)$.
 	Therefore the range of $Q$ coincides with the range of $\Gamma_{\nu}$.
 	Since $Q$ is a projection and $\n Q\n = 1$, we conclude that $Q$ is the orthogonal projection onto $\Gamma_\nu A^2(D_2)$.
 \end{proof}

 \begin{lemma}\label{Try-kernel}
 	Let $K_{D_1}^{\nu}$ and $K_{D_2}$ be the Bergman kernels associated to the Bergman projections onto $A^2(D_1,\nu)$ and $A^2(D_2)$ respectively.
 	Then 
 	\begin{equation}\label{kernel interplay}
 	K_{D_2}(\phi(z), \phi(w)) = \sum\limits_{j=1}^{m}   K_{D_1}^{\nu}(\phi^j \circ \phi (z), w)
 	\end{equation}
 	where $\{\phi^j\}_{1}^{m}$ are the local inverses of $\phi$.
 \end{lemma}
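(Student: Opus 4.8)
The plan is to derive \eqref{kernel interplay} by computing the reproducing kernel of the closed subspace $\Gamma_\nu A^2(D_2)\subseteq A^2(D_1,\nu)$ in two different ways and comparing them. Write $k^\nu_w := K^\nu_{D_1}(\cdot,w)\in A^2(D_1,\nu)$ for the weighted Bergman kernel of $D_1$ with one slot frozen. First I would feed $k^\nu_w$ into the projection formula of Lemma~\ref{Try-projection}: by \eqref{projection formula},
\[
(P_\nu k^\nu_w)(z) \;=\; \frac{1}{m}\sum_{j=1}^{m} k^\nu_w\big(\phi^j\circ\phi(z)\big) \;=\; \frac{1}{m}\sum_{j=1}^{m} K^\nu_{D_1}\big(\phi^j\circ\phi(z),\,w\big),
\]
so the right-hand side of \eqref{kernel interplay} is exactly $m\,(P_\nu k^\nu_w)(z)$.

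For the second computation I would use that $\Gamma_\nu$ is a unitary from $A^2(D_2)$ onto $\Gamma_\nu A^2(D_2)$ with $P_\nu\Gamma_\nu=\Gamma_\nu$ (as recorded in the proof of Lemma~\ref{Try-projection}, together with Remark~\ref{rem:img Gamma_nu}). Since $P_\nu k^\nu_w$ lies in $\Gamma_\nu A^2(D_2)$, write $P_\nu k^\nu_w = \Gamma_\nu h$ for a unique $h\in A^2(D_2)$. Testing against an arbitrary $f\in A^2(D_2)$ and using the isometry of $\Gamma_\nu$, the self-adjointness of $P_\nu$, the relation $P_\nu\Gamma_\nu=\Gamma_\nu$, and the reproducing property in $A^2(D_1,\nu)$,
\[
\langle f,h\rangle_{A^2(D_2)} = \langle \Gamma_\nu f,\Gamma_\nu h\rangle_\nu = \langle \Gamma_\nu f, P_\nu k^\nu_w\rangle_\nu = \langle \Gamma_\nu f, k^\nu_w\rangle_\nu = (\Gamma_\nu f)(w) = \tfrac{1}{\sqrt m}\,f\big(\phi(w)\big),
\]
so $h=\tfrac{1}{\sqrt m}\,K_{D_2}(\cdot,\phi(w))$, whence $(P_\nu k^\nu_w)(z) = (\Gamma_\nu h)(z) = \tfrac{1}{m}\,K_{D_2}\big(\phi(z),\phi(w)\big)$. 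Comparing with the first computation and cancelling the factor $1/m$ yields \eqref{kernel interplay}. One could equally expand $P_\nu k^\nu_w$ against an orthonormal basis $\{\Gamma_\nu e_k\}$ of $\Gamma_\nu A^2(D_2)$ arising from an orthonormal basis $\{e_k\}$ of $A^2(D_2)$ and recognize $\sum_k e_k(\phi(z))\overline{e_k(\phi(w))}=K_{D_2}(\phi(z),\phi(w))$; the two routes amount to the same computation.

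I do not anticipate a genuine obstacle: granting Lemma~\ref{Try-projection} and the unitarity of $\Gamma_\nu$, the argument is essentially a uniqueness-of-reproducing-kernel manipulation. The one point deserving a word of care is treating $A^2(D_1,\nu)$ as a reproducing kernel Hilbert space so that $k^\nu_w$ is legitimate: the weight $\nu=|J\phi|^2$ vanishes to finite order precisely along the branch locus of $\phi$, a proper analytic subset of $D_1$, and point evaluations remain bounded; alternatively, since both sides of \eqref{kernel interplay} are holomorphic in $z$ and conjugate-holomorphic in $w$, it suffices to prove the identity off the branch locus and then extend it by analytic continuation.
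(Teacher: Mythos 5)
Your proposal is correct and follows essentially the same route as the paper's proof: both apply the projection formula of Lemma~\ref{Try-projection} to $K_{D_1}^{\nu}(\cdot,w)$, then use the isometry of $\Gamma_\nu$, the orthogonality $\langle \Gamma_\nu f,(I-P_\nu)k^\nu_w\rangle_\nu=0$, and the reproducing properties on both sides to identify $P_\nu k^\nu_w$ with $\frac{1}{m}K_{D_2}(\phi(\cdot),\phi(w))$. The only cosmetic difference is that you name the preimage $h=\Gamma_\nu^{-1}(P_\nu k^\nu_w)$ explicitly where the paper works directly with $\Gamma_\nu^{\ast}$ and the tilde construction of \eqref{adjoint}.
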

 \begin{proof}
 	For $f\in A^2(D_2)$ and $w \in D_1$, $K_{D_1}^{\nu}(\cdot,w)\in A^2(D_1,\nu)$ and hence
 	\begin{equation}
 	\langle \Gamma_{\nu}f, (I-P_{\nu})K_{D_1}^{\nu}(\cdot,w) \rangle_{(D_1,\nu)} = 0
 	\end{equation}
 	where $P_{\nu}$ is the orthogonal projection of $A^2(D_1, \nu)$ onto $\Gamma_{\nu} A^2(D_2)$.
   	By the reproducing property of $K_{D_2}$ and $\Gamma_{\nu}$ being an isometry, we have
 	\begin{align}
 	\langle f, \Gamma_{\nu}^{\ast}P_{\nu}K_{D_1}^{\nu}(\cdot,w) \rangle_{D_2} &= \langle \Gamma_{\nu}f, P_{\nu}K_{D_1}^{\nu}(\cdot,w) \rangle_{(D_1, \nu)} 
 	= \langle \Gamma_{\nu}f, K_{D_1}^{\nu}(\cdot,w) \rangle_{(D_1, \nu)}\\
 	&= \Gamma_{\nu}f(w)
 	= \dfrac{1}{\sqrt{m}} f(\phi(w))
 	 = \dfrac{1}{\sqrt{m}} \langle f, K_{D_2}(\cdot, \phi(w)) \rangle_{D_2}.
 	\end{align}
 	 So,
	$K_{D_2}(\cdot, \phi(w)) = \sqrt{m}\,\Gamma_{\nu}^{\ast} \big(P_{\nu}K_{D_1}^{\nu}(\cdot ,w)\big)$.
 	Now, by \eqref{adjoint} and Lemma~\ref{Try-projection},
 	\begin{equation}
 	   K_{D_2}(\phi(z), \phi(w))
 	   =mP_{\nu}K_{D_1}^{\nu}(z ,w)
 	   =\sum\limits_{j=1}^{m} K_{D_1}^{\nu}(\phi^j \circ \phi(z) ,w).
 	\end{equation}
 \end{proof}

\begin{rem}
The relation between the Bergman kernels $K_{D_1}$ and $ K_{D_1}^{\nu}$ is given by 
\begin{equation}\label{kernel interplay 1}
K_{D_1}(z,w) = J\phi(z)  K_{D_1}^{\nu}(z,w) \overline{J\phi(w)}.
\end{equation}
In fact, if $\{\varphi_n\}_{n=1}^{\infty}$ is an orthonormal basis for $A^2(D_1, \nu)$, then $\{\varphi_n J\phi\}_{n=1}^{\infty}$ is an orthonormal basis for $A^2(D_1)$.
 Since $ \phi \circ \phi^j \circ \phi = \phi$, we have $J\phi(\phi^j \circ \phi(z))J\phi^j(\phi(z))J\phi(z) = J\phi(z)$.
 An alternate approach to deducing \eqref{kernel interplay} is to use \eqref{kernel interplay 1} along with Corollary 1 of \cite{Try13}. 
\end{rem} 

The following results are consequences of Lemma \ref{Try-kernel}.
\begin{lemma}\label{bergman projection}
	Let $B_{D_1}^{\nu}$ and $B_{D_2}$ be the Bergman projections associated to $A^2(D_1, \nu)$ and $A^2(D_2)$ respectively. Then,
	\begin{equation}
	B_{D_2}(g)(\zeta) = \dfrac{1}{m} \sum\limits_{j=1}^{m} B_{D_1}^{\nu}(g \circ \phi)(\phi^j(\zeta)), \; \text{ for }  g \in L^2(D_2) \text{ and } \zeta \in D_2,
	\end{equation}
	 where $\{\phi^j\}_{1}^{m}$ are the local inverses of $\phi$.
\end{lemma}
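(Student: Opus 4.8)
The plan is to derive this formula for the Bergman projection $B_{D_2}$ directly from the kernel identity in Lemma~\ref{Try-kernel}, using the change-of-variables relation \eqref{cov} to pull integrals over $D_2$ back to $D_1$. First I would start from the defining integral formula for the Bergman projection of $D_2$, namely $B_{D_2}(g)(\zeta) = \int_{D_2} K_{D_2}(\zeta,\eta)\,g(\eta)\,dV(\eta)$ for $g\in L^2(D_2)$ and $\zeta\in D_2$. Since $\phi$ is surjective, I may write $\zeta = \phi(z)$ for some $z\in D_1$; I would then substitute $\eta = \phi(w)$ in the integral. By \eqref{cov} applied to the function $\eta\mapsto K_{D_2}(\phi(z),\eta)\,g(\eta)$ (which lies in $L^1(D_2)$), we get
\begin{equation}
m\,B_{D_2}(g)(\phi(z)) = \int\limits_{D_1} K_{D_2}(\phi(z),\phi(w))\,g(\phi(w))\,\abs{J\phi(w)}^2\,dV(w).
\end{equation}

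Next I would insert the kernel identity \eqref{kernel interplay} from Lemma~\ref{Try-kernel}, replacing $K_{D_2}(\phi(z),\phi(w))$ by $\sum_{j=1}^m K_{D_1}^{\nu}(\phi^j\circ\phi(z),w)$, and interchange the finite sum with the integral:
\begin{equation}
m\,B_{D_2}(g)(\phi(z)) = \sum\limits_{j=1}^{m}\int\limits_{D_1} K_{D_1}^{\nu}(\phi^j\circ\phi(z),w)\,(g\circ\phi)(w)\,\abs{J\phi(w)}^2\,dV(w).
\end{equation}
Now each integral is exactly the weighted Bergman projection $B_{D_1}^{\nu}(g\circ\phi)$ evaluated at the point $\phi^j\circ\phi(z)$, by the integral representation of $B_{D_1}^{\nu}$ with weight $\nu=\abs{J\phi}^2$. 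This gives $m\,B_{D_2}(g)(\phi(z)) = \sum_{j=1}^m B_{D_1}^{\nu}(g\circ\phi)(\phi^j\circ\phi(z))$. Finally, to recover the stated form I would set $\zeta=\phi(z)$ wherever possible; the composition $\phi^j\circ\phi$ acting on $z$ should be rewritten as $\phi^j$ acting on $\zeta=\phi(z)$, since the $\phi^j$ are precisely the local inverses of $\phi$, so that $\phi^j\circ\phi(z)$ and $\phi^j(\zeta)$ coincide as points (up to relabeling of the branches). Dividing by $m$ yields the claimed identity.

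The main subtlety I anticipate is the careful handling of the local inverses $\{\phi^j\}$: these are defined only locally (away from the branch locus where $J\phi$ vanishes), so the expression $\sum_{j=1}^m F(\phi^j(\zeta))$ must be interpreted as a well-defined global object even though individual terms are multivalued — the symmetry of the sum over all $m$ branches is what makes it single-valued, and one should note that the sum extends holomorphically across the (measure-zero, locally removable) branch locus. A secondary point is justifying the substitution $\eta=\phi(w)$ in the integral over $D_2$: this is the content of the change-of-variables formula \eqref{cov} for proper maps of multiplicity $m$, which counts preimages with multiplicity, and one should confirm it applies to $L^1$ integrands and not merely to holomorphic $f\in A^2(D_2)$ as stated in \eqref{cov} — but this is standard, following by approximating $g$ by continuous compactly supported functions or simply because the stratification of $D_2$ by the number of preimages has the expected structure off a thin set. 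Neither obstacle is serious; the proof is essentially a bookkeeping exercise chaining together \eqref{cov}, Lemma~\ref{Try-kernel}, and the integral formula for $B_{D_1}^{\nu}$.
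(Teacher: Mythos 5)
Your proposal is correct and follows essentially the same route as the paper's own proof: pull the integral for $B_{D_2}(g)$ back to $D_1$ via the change-of-variables identity \eqref{cov}, substitute the kernel relation of Lemma~\ref{Try-kernel}, and identify each resulting integral as $B_{D_1}^{\nu}(g\circ\phi)$ evaluated at $\phi^j(\zeta)$. Your additional remarks on the branch locus and on extending \eqref{cov} to $L^1$ integrands address points the paper leaves implicit, but do not change the argument.
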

\begin{proof}
	 Let $g \in L^2(D_2)$.
	 Then, $g \circ \phi \in L^2(D_1,\nu)$, 
	 \begin{align}
	 B_{D_1}^{\nu}(g \circ \phi)&=  \int_{D_1} K_{D_1}^{\nu} (\cdot,w)(g \circ \phi)(w) |J\phi(w)|^2 dV(w), \text{ and }\\
	 B_{D_2}(g) &=  \int_{D_2} K_{D_2} (\cdot,\eta)g(\eta) dV(\eta)\\
	 &= \dfrac{1}{m} \int_{D_1} K_{D_2} (\cdot,\phi(w))g(\phi(w)) |J\phi(w)|^2 dV(w).
	 \end{align}
	For $\zeta\in D_2$, there exists $z\in D_1$ such that $\zeta=\phi(z)$.
	Now, by Lemma~\ref{Try-kernel},
	\begin{align}
	B_{D_2}(g)(\zeta) =B_{D_2}(g)(\phi(z))&=  \dfrac{1}{m} \int_{D_1} K_{D_2} (\phi(z),\phi(w)) (g\circ \phi)(w) |J\phi(w)|^2 dV(w) \\
	&= \dfrac{1}{m} \int_{D_1} \sum\limits_{j=1}^{m}   K_{D_1}^{\nu}(\phi^j \circ \phi (z), w) (g\circ \phi)(w) |J\phi(w)|^2 dV(w)\\
	&= \dfrac{1}{m} \sum\limits_{j=1}^{m} B_{D_1}^{\nu}(g \circ \phi)(\phi^j(\zeta)).
	\end{align}
\end{proof}

\begin{cor}\label{cor:FriedRank1}
    Let $T\colon \Gamma_{\nu} A^2(D_2) \to A^2(D_1,\nu)$ be defined by $T(f) = B_{D_1}^\nu(\bar{f})$.
    If $T$ is of rank one, then  so is the Friedrichs operator on $D_2$.
\end{cor}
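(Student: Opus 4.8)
The plan is to realize the Friedrichs operator $F_{D_2}$ as a composition of $T$ with unitaries and an orthogonal projection, so that its rank cannot exceed that of $T$. Precisely, I claim the operator identity
\begin{equation}
F_{D_2} = \Gamma_{\nu}^{\ast}\,P_{\nu}\,T\,\Gamma_{\nu}
\end{equation}
on $A^2(D_2)$, where $P_{\nu}$ is the orthogonal projection of $A^2(D_1,\nu)$ onto $\Gamma_{\nu}A^2(D_2)$ from Lemma~\ref{Try-projection}. Granting this, since $\Gamma_{\nu}$ is a unitary from $A^2(D_2)$ onto $\Gamma_{\nu}A^2(D_2)$ with inverse $\Gamma_{\nu}^{\ast}$ (Remark~\ref{rem:img Gamma_nu}) and $P_{\nu}$ is a projection, we obtain $\operatorname{rank}F_{D_2}\le\operatorname{rank}T$. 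As $F_{D_2}(1)=B_{D_2}(1)=1\neq0$, the operator $F_{D_2}$ is never zero, so if $T$ has rank one then $F_{D_2}$ has rank exactly one.

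To establish the identity I would first extract from Lemma~\ref{Try-projection} and the formula \eqref{adjoint} for $\Gamma_{\nu}^{\ast}$ the auxiliary fact that for every $h\in A^2(D_1,\nu)$ the symmetrized composition $\zeta\mapsto\frac1m\sum_{j=1}^{m}h(\phi^{j}(\zeta))$ descends to a well-defined holomorphic function on $D_2$ -- namely $\widetilde{P_{\nu}h}$ -- and that
\begin{equation}
\big(\Gamma_{\nu}^{\ast}P_{\nu}h\big)(\zeta)=\frac{1}{\sqrt m}\sum_{j=1}^{m}h(\phi^{j}(\zeta)),\qquad\zeta\in D_2 .
\end{equation}
Indeed $P_{\nu}h=\frac1m\sum_{j}(h\circ\phi^{j}\circ\phi)\in\Gamma_{\nu}A^2(D_2)$, hence $\widetilde{P_{\nu}h}(\phi(z))=P_{\nu}h(z)=\frac1m\sum_{j}h(\phi^{j}(\phi(z)))$, and $\Gamma_{\nu}^{\ast}P_{\nu}h=\sqrt m\,\widetilde{P_{\nu}h}$ by \eqref{adjoint}.

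Next, I would fix $g\in A^2(D_2)$ and apply Lemma~\ref{bergman projection} to $\bar g\in L^2(D_2)$ to write
\begin{equation}
F_{D_2}(g)(\zeta)=B_{D_2}(\bar g)(\zeta)=\frac1m\sum_{j=1}^{m}B_{D_1}^{\nu}(\bar g\circ\phi)(\phi^{j}(\zeta)).
\end{equation}
Because $\phi$ is holomorphic, $\bar g\circ\phi=\overline{g\circ\phi}=\sqrt m\,\overline{\Gamma_{\nu}g}$, and since $\Gamma_{\nu}g$ lies in the domain of $T$ this gives $B_{D_1}^{\nu}(\bar g\circ\phi)=\sqrt m\,B_{D_1}^{\nu}(\overline{\Gamma_{\nu}g})=\sqrt m\,T(\Gamma_{\nu}g)$. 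Substituting and invoking the auxiliary fact with $h=T(\Gamma_{\nu}g)\in A^2(D_1,\nu)$ yields
\begin{equation}
F_{D_2}(g)(\zeta)=\frac{1}{\sqrt m}\sum_{j=1}^{m}T(\Gamma_{\nu}g)(\phi^{j}(\zeta))=\big(\Gamma_{\nu}^{\ast}P_{\nu}T\Gamma_{\nu}g\big)(\zeta),
\end{equation}
which is the asserted operator identity.

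I do not anticipate a genuine obstacle here: the delicate analytic points -- that symmetrized compositions with the local inverses $\phi^{j}$ are single-valued and holomorphic on $D_2$, and the transformation of the Bergman projection under the proper map -- are already packaged in Lemma~\ref{Try-projection}, equation~\eqref{adjoint}, and Lemma~\ref{bergman projection}. The only points requiring care are the bookkeeping of the normalizing factors $\sqrt m$ and the elementary identity $\bar g\circ\phi=\overline{g\circ\phi}$.
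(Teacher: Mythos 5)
Your proof is correct. It rests on the same key ingredient as the paper's own argument, namely Lemma~\ref{bergman projection}, which expresses $B_{D_2}(\bar g)$ as the symmetrized pullback of $B_{D_1}^{\nu}(\overline{g\circ\phi})$ along the local inverses; but you finish differently. The paper observes that since $T(\lambda)=\bar\lambda$ for constants $\lambda$ and $T$ is assumed to have rank one, the range of $T$ is exactly the constants; hence $T(g\circ\phi)=a_0$ is a constant and the symmetrized sum in Lemma~\ref{bergman projection} collapses to $a_0$, showing directly that $F_{D_2}$ maps every function to a constant. You instead package the same computation as the operator factorization $F_{D_2}=\Gamma_{\nu}^{\ast}P_{\nu}T\Gamma_{\nu}$ (your verification of this identity, including the bookkeeping of the $\sqrt m$ factors and the use of \eqref{adjoint} and Lemma~\ref{Try-projection} to make $\Gamma_{\nu}^{\ast}P_{\nu}$ explicit, is accurate) and then invoke monotonicity of rank under composition. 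Your route is slightly more general: it yields $\operatorname{rank}F_{D_2}\le\operatorname{rank}T$ for any finite rank, with no need to identify the range of $T$. The paper's version, by contrast, delivers the extra explicit information that the range of $F_{D_2}$ consists of constant functions, which is the form of the conclusion that connects to the quadrature identities mentioned in the introduction. Both arguments need the observation that $F_{D_2}\neq 0$ (via $F_{D_2}(1)=1$) to upgrade ``rank at most one'' to ``rank one''; you state this explicitly, while the paper leaves it implicit.
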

\begin{proof}
    Since $T(\lambda)=\bar{\lambda}$ for any $\lambda\in\C$, the range of $T$ is the set of (complex) constant functions.
    
	For $g \in A^2(D_2)$, we have $g \circ \phi \in \Gamma_{\nu} A^2(D_2)$ and $T(g \circ \phi)$ is a constant, say $a_0$.
	Now, by Lemma \ref{bergman projection},
	\begin{equation}
	    B_{D_2}(\bar g)(w) =  \dfrac{1}{m} \sum\limits_{j=1}^{m} B_{D_1}^{\nu}(\overline{g \circ \phi})(\phi^j(w)) = a_0,
	\end{equation}
	for $w \in D_2$.
\end{proof}
We now prove the main result of this article.
\begin{thm}\label{main result}
Let $D_1$ and $D_2$ be bounded domains in $\C^n$, $n\ge 1$, and $\phi\colon D_1\to D_2$ be a proper holomorphic map of finite multiplicity.
If $D_1$ is circular, $0\in D_1$, and $J\phi$ is a homogeneous polynomial, then the Friedrichs operator of $D_2$ is of rank one.
\end{thm}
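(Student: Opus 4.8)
The plan is to reduce the theorem to Corollary~\ref{cor:FriedRank1} by proving a weighted analogue of Proposition~\ref{prop:FriedCircRank1}. Write $\nu=|J\phi|^2$. By Corollary~\ref{cor:FriedRank1} it is enough to show that $T\colon\Gamma_\nu A^2(D_2)\to A^2(D_1,\nu)$, $T(f)=B_{D_1}^\nu(\bar f)$, has rank one. Since $D_2$ is bounded, $1\in A^2(D_2)$ and $\Gamma_\nu(1)=1/\sqrt m$, so every constant function lies in $\Gamma_\nu A^2(D_2)$; as $T(c)=\bar c$ for $c\in\C$, the range of $T$ is a nonzero subspace of the (one-dimensional) space of constants. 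Hence it suffices to prove that $B_{D_1}^\nu(\bar f)$ is constant for \emph{every} $f\in A^2(D_1,\nu)$, i.e. that the weighted Friedrichs operator $F_{D_1}^\nu(g)=B_{D_1}^\nu(\bar g)$ on $A^2(D_1,\nu)$ has rank one, and then to restrict to $\Gamma_\nu A^2(D_2)$.

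The key observation --- and the only place the hypothesis on $J\phi$ is used --- is that homogeneity of $J\phi$ makes the measure $\nu\,dV$ invariant under the rotations $z\mapsto e^{i\theta}z$: if $\deg J\phi=d$ then $|J\phi(e^{i\theta}z)|^2=|e^{id\theta}J\phi(z)|^2=|J\phi(z)|^2$, and $z\mapsto e^{i\theta}z$ preserves Lebesgue measure on $\C^n$. Consequently each operator $(R_\theta f)(z):=f(e^{i\theta}z)$ is unitary on $L^2(D_1,\nu)$, and the development preceding Lemma~\ref{lemma:circular-Azukawa} carries over verbatim with $dV$ replaced by $\nu\,dV$. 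I would record the resulting weighted analogue of Lemma~\ref{lemma:circular-Azukawa}: for $f\in A^2(D_1,\nu)$, the Fourier coefficients $f_k:=\frac1{2\pi}\int_0^{2\pi}e^{-ik\theta}(R_\theta f)\,d\theta$ are holomorphic, $k$-homogeneous, lie in $A^2(D_1,\nu)$, are mutually $\langle\cdot,\cdot\rangle_\nu$-orthogonal, and satisfy $f=\sum_k f_k$ in $L^2(D_1,\nu)$ with $\|f\|_\nu^2=\sum_k\|f_k\|_\nu^2$; since $0\in D_1$, only $k\ge0$ occurs. In the weighted version of part~(a) the weight enters as $|J\phi(r\alpha(\zeta)e^{i\theta})|^2$, which is independent of $\theta$, so the $\theta$-integration producing the orthogonality is unaffected.

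Granting this, the endgame copies the proof of Proposition~\ref{prop:FriedCircRank1}. For $f=\sum_{k\ge0}f_k\in A^2(D_1,\nu)$, continuity of $B_{D_1}^\nu$ gives $F_{D_1}^\nu(f)=\sum_{k\ge0}B_{D_1}^\nu(\bar f_k)$. For $k>0$ and any $g_\ell\in H_\ell\cap A^2(D_1,\nu)$ with $\ell\ge0$, the weighted Lemma~\ref{lemma:circular-Azukawa}(a) writes $\langle\bar f_k,g_\ell\rangle_\nu$ as an integral whose $\theta$-part is $\int_0^{2\pi}e^{-i(k+\ell)\theta}\,d\theta=0$; summing over $\ell$ using the weighted orthogonal decomposition of $A^2(D_1,\nu)$ yields $\langle\bar f_k,g\rangle_\nu=0$ for all $g\in A^2(D_1,\nu)$, hence $B_{D_1}^\nu(\bar f_k)=0$. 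Therefore $F_{D_1}^\nu(f)=B_{D_1}^\nu(\bar f_0)=\bar f_0$ is constant, so $F_{D_1}^\nu$ --- and a fortiori its restriction $T$ to $\Gamma_\nu A^2(D_2)$ --- has rank one, and Corollary~\ref{cor:FriedRank1} gives the conclusion.

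I expect the only genuine work to lie in the weighted analogue of Lemma~\ref{lemma:circular-Azukawa}, specifically in checking that $A^2(D_1,\nu)$ is a Hilbert space of holomorphic functions on which the homogeneous expansion converges in the $\nu$-norm; this should follow routinely from rotation-invariance of $\nu\,dV$ together with the uniform-on-compacta convergence of the homogeneous expansion on a circular domain (Azukawa~\cite{Azukawa}), after which each statement is a transcription of its unweighted counterpart. I do not anticipate a real obstacle: the content is entirely in the observation that homogeneity of $J\phi$ is exactly what makes the circular-domain machinery available for the weighted space $A^2(D_1,\nu)$.
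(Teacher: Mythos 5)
Your proposal is correct and follows essentially the same route as the paper: reduce to Corollary~\ref{cor:FriedRank1}, establish a weighted analogue of Lemma~\ref{lemma:circular-Azukawa} for $A^2(D_1,\nu)$ using the homogeneity of $J\phi$, and conclude that $T$ kills every $k$-homogeneous component with $k>0$. The only (cosmetic) difference is that you derive the weighted orthogonality from rotation-invariance of $\nu\,dV$, whereas the paper absorbs $J\phi$ into the homogeneous functions and quotes the unweighted lemma via $\langle f_k,f_\ell\rangle_\nu=\langle J\phi\,f_k,J\phi\,f_\ell\rangle$; both hinge on exactly the same use of the hypothesis.
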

\begin{proof} 
With the setup as in the beginning of this section, $\Gamma_{\nu} A^2(D_2)$ is a closed subspace of $A^2(D_1, \nu)$ that is isometrically isomorphic to $ A^2(D_2)$.

Since $J\phi$ is a homogeneous polynomial and $\nu=\abs{J\phi}^2$, a version of Lemma~\ref{lemma:circular-Azukawa} holds for $A^2(D_1,\nu)$.
Note that $0\in D_1$ and hence any homogeneous polynomial in $D_1$ is $k$-homogeneous for some $k\ge 0$.
We use Lemma~\ref{lemma:circular-Azukawa} to get that 
\begin{align}
\langle f_k, f_{\ell}\rangle_{D_1,\nu}
& = \int_{D_1} f_k(x,y,z) \overline{f_{\ell}(x,y,z)} \nu(z) dV(x,y,z) \\
&=  \int_{D_1} \left((J\phi)f_k(x,y,z)\right) \left(\overline{(J\phi)f_{\ell}(x,y,z)}\right) dV(x,y,z) =0,
\end{align}
for $f_k \in H_k$, $f_{\ell} \in H_{\ell}$, and $k \neq \ell$.
Hence,
\begin{equation}
    A^2(D_1,\nu) = \bigoplus\limits_{k\geq 0}^{} H_k\cap A^2(D_1,\nu).
\end{equation}
Lemma~\ref{lemma:circular-Azukawa}(a) also gives us that 
$\langle \bar{f}_k,f_{\ell}\rangle_{D_1,\nu} = 0$
for $f_k\in H_k, f_{\ell}\in H_{\ell}$ unless
$k=\ell=0$.

Let $T\colon \Gamma_{\nu} A^2(D_2) \to A^2(D_1,\nu)$ by $T(f) = B_{D_1}^\nu(\bar{f})$.
Write $f\in\Gamma_{\nu} A^2(D_2)$ as $f=\sum_{k\ge 0} f_k$, where $f_k\in H_k$, to get
\begin{equation}
T(f) = B_{D_1}^\nu\big(\bar{f}\big)
=B_{D_1}^{\nu}\Big(\sum\limits_{k\ge 0}^{} \bar{f}_k\Big)
=\sum\limits_{k\ge 0}^{}B_{D_1}^{\nu}\big(\bar{f}_k\big)
= B_{D_1}^\nu\big(\bar{f}_0\big) \text{ (a constant)}.
\end{equation}
Thus $T$ is of rank one.
Now, by Corollary~\ref{cor:FriedRank1}, the Friedrichs operator of $D_2$ is of rank one.
\end{proof}


\section{Friedrichs Operator of domains related to \texorpdfstring{$\mu$}{Mu}-synthesis}\label{sec:FriedrichsOnMuDomains}

In this section we consider three domains related to $\mu$-synthesis -- the tetrablock, the pentablock and the symmetrized polydisc.
For each of these domains, we first recall some important characterizations that define them.
We then proceed to show, using Theorem~\ref{main result}, that their Friedrichs operators are of rank one.
We now begin with a brief description of the $\mu$-synthesis problem.

\subsection{\texorpdfstring{$\mu$}{Mu}-Synthesis}\label{subsec:muSynth}

The $\mu$-synthesis problem plays an important role in modelling structured uncertainties in control engineering.
Here, $\mu$ is a cost function on matrices that denotes the structured singular value of a matrix relative to a subspace of linear transformations (see \cites{BFT90,doyle}).
Let $M$ be a linear subspace of the complex $n\times m$ matrices $\mathbb C^{n \times m}$.
The {\em structured singular value} of an $B\in\C^{m \times n}$, denoted by $\mu_M(B)$, is defined as
\begin{equation}
\mu_M(B) = \frac{1}{\inf \left\{ \n X \n \colon X \in M, I-BX  \mbox{ is singular}\right\}},    
\end{equation}
where $\n X\n$ denotes the operator norm of the matrix $X$.
We set $\mu_M(B)=0$ whenever $I-BX$ is non-singular for all $X \in M$.

Let $\lambda_1, \dots, \lambda_k$ be distinct points in the unit disc $\mathbb D \subset \mathbb{C}$, and $B_1, \dots, B_k\in\C^{m \times n}$.
The $\mu$-synthesis problem is to find an analytic function $f \colon \mathbb D \to \mathbb C^{m \times n}$ such that
\begin{equation}
    f(\lambda_j)=B_j, \text{ for } 1 \leq j \leq k, \quad\text{and}\quad
\mu_M\big(f(\lambda)\big)<1, \text{ for } \lambda \in \mathbb D.
\end{equation}

When $M$ is the set of scalar square matrices ($m=n$), $\mu_M$ coincides with the spectral radius.
In that case, the $\mu$-synthesis problem reduces to the spectral Nevanlinna-Pick interpolation problem. 
Agler and Young \cite{AY99} showed that this problem, in $m=n=2$, reduces to an interpolation problem from $\D$ to the symmetrized bidisc $\G_2$.
A similar phenomenon holds for certain other subspaces of $\C^{2\times 2}$ reducing the $\mu$-synthesis problem to an interpolation problem into the domains tetrablock $\E$ and pentablock $\mathcal{P}$.

\subsection{Tetrablock}
The tetrablock is defined as
	\begin{equation}\label{tetra}
	\mathbb E = \{ (x_1,x_2,x_3)\in\mathbb C^3\,:\, 1-zx_1-wx_2+zwx_3\neq 0 \,, z,w \in\overline{\mathbb D} \}
	\end{equation}
and can be characterized as follows.
\begin{thm}[{\cite{awy}*{Theorem 2.4}}] \label{char tetra}
	For $x \in \mathbb C^3$, the following are equivalent.
	\begin{enumerate}
		\item $x \in \E$.
		\item $|x_1 - \bar x_2 x_3| + |x_1 x_2 - x_3| < 1-|x_2|^2$.
		\item $|x_2 - \bar x_1 x_3| + |x_1 x_2 - x_3| < 1-|x_1|^2$.
		\item $|x_1 - \bar x_2x_3| + |x_2 - \bar x_1 x_3| <1 - |x_3|^2$.
		\item There exists a symmetric $2 \times 2$ matrix $A=[a_{ij}]$ such that $||A|| <1$  and $x=(a_{11}, a_{22}, \det A)$.
		\item $|x_3| < 1$ and there exist $\beta_1, \beta_2 \in \mathbb C$ such that
		$|\beta_1| + |\beta_2| < 1$  and
		\[
		x_1=\beta_1 + \bar\beta_2 x_3, \quad x_2 = \beta_2+\bar\beta_1 x_3.
		\]
	\end{enumerate}
\end{thm}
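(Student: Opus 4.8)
I would establish the six equivalences by moving among three descriptions of $\E$: the defining condition recast as a contractivity statement for a M\"obius function, the explicit inequalities $(2)$--$(4)$, and the matricial pictures $(5)$--$(6)$. The first step is to solve the defining condition one variable at a time. The polynomial $p(z,w)=1-zx_1-wx_2+zwx_3$ is affine in each variable; writing $p(z,w)=(1-zx_1)-w(x_2-zx_3)$, zero-freeness in $w$ on $\overline\D$ forces $1-zx_1\ne 0$ (take $w=0$) and is then equivalent to $|x_2-zx_3|<|1-zx_1|$, while letting $z$ run over $\overline\D$ forces $|x_1|<1$. Hence $z\mapsto 1-zx_1$ is zero-free on $\overline\D$ and
\[
(1)\iff\Big[\ \psi(z):=\frac{x_2-zx_3}{1-zx_1}\ \text{maps }\overline\D\text{ into }\D\ \Big];
\]
by the $(z,x_1)\leftrightarrow(w,x_2)$ symmetry of $p$ one also has $(1)\iff[\,\widetilde\psi(w):=\tfrac{x_1-wx_3}{1-wx_2}\text{ maps }\overline\D\text{ into }\D\,]$. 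In particular $\E$ is invariant under $x_1\leftrightarrow x_2$ (so $(2)\Leftrightarrow(3)$ as soon as either is tied to $(1)$) and $|x_1|,|x_2|<1$ on $\E$.

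Two things now follow. First, since $\psi$ is holomorphic on a neighbourhood of $\overline\D$, ``$\psi(\overline\D)\subset\D$'' is equivalent to ``$|x_1|<1$ and $|x_2-zx_3|^2<|1-zx_1|^2$ for $|z|=1$''; maximising $|x_2-zx_3|^2-|1-zx_1|^2$ over the circle rewrites this as ``$|x_1|<1$ and $2|x_1-\bar x_2x_3|<1+|x_1|^2-|x_2|^2-|x_3|^2$'', and the partner statement for $\widetilde\psi$ (with $x_1\leftrightarrow x_2$) is $2|x_2-\bar x_1x_3|<1+|x_2|^2-|x_1|^2-|x_3|^2$. Adding the last two inequalities — both consequences of $(1)$ — yields $(4)$, so $(1)\Rightarrow(4)$. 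Second, apply one step of Schur's algorithm to $\psi$: with $\gamma=\psi(0)=x_2$, set $\psi_1(z)=\tfrac1z\cdot\tfrac{\psi(z)-\gamma}{1-\bar\gamma\psi(z)}$; since $\zeta\mapsto\tfrac{\zeta-\gamma}{1-\bar\gamma\zeta}$ is an automorphism of $\D$ carrying $\gamma$ to $0$, a maximum-modulus argument (reversed via $\psi(z)=\tfrac{z\psi_1(z)+\gamma}{1+\bar\gamma z\psi_1(z)}$) shows $\psi$ maps $\overline\D$ into $\D$ iff $\psi_1$ does, and a direct computation gives
\[
\psi_1(z)=\frac{x_1x_2-x_3}{(1-|x_2|^2)-z(x_1-\bar x_2x_3)}.
\]
As $z$ ranges over $\overline\D$ the denominator sweeps the closed disc of radius $|x_1-\bar x_2x_3|$ about $1-|x_2|^2$, whose minimal modulus is $\max\{0,(1-|x_2|^2)-|x_1-\bar x_2x_3|\}$; hence $\psi_1(\overline\D)\subset\D$ iff $|x_1-\bar x_2x_3|+|x_1x_2-x_3|<1-|x_2|^2$, i.e.\ $(2)$. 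Therefore $(1)\Leftrightarrow(2)\Leftrightarrow(3)$.

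For the matricial conditions, fix $b\in\C$ with $b^2=x_1x_2-x_3$ and set $A=\left(\begin{smallmatrix}x_1&b\\ b&x_2\end{smallmatrix}\right)$, so that $(a_{11},a_{22},\det A)=(x_1,x_2,x_3)$ and every symmetric matrix with this triple has this form. One has the identity $1-zx_1-wx_2+zwx_3=\det\!\big(I-\operatorname{diag}(z,w)A\big)$, and since $\|\operatorname{diag}(z,w)\|\le 1$ on $\overline\D\times\overline\D$, $\|A\|<1$ forces $I-\operatorname{diag}(z,w)A$ to be invertible, giving $(5)\Rightarrow(1)$. For the converse in this model, expanding $I-A^{*}A\succ 0$ through the $2\times2$ criterion (positivity of the $(1,1)$ entry and of the determinant), and using $|b|^2=|x_1x_2-x_3|$ together with the identity $|x_2-\bar x_1x_3|^2-|x_1x_2-x_3|^2=(1-|x_1|^2)(|x_2|^2-|x_3|^2)$, shows $\|A\|<1\Leftrightarrow(3)$; hence $(3)\Leftrightarrow(5)$. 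Next, $(4)\Leftrightarrow(6)$: solving the linear system in $(6)$ gives $\beta_1=\tfrac{x_1-\bar x_2x_3}{1-|x_3|^2}$ and $\beta_2=\tfrac{x_2-\bar x_1x_3}{1-|x_3|^2}$ (both $(4)$ and $(6)$ forcing $|x_3|<1$), so $|\beta_1|+|\beta_2|<1$ is precisely $(4)$. Finally, $(6)\Rightarrow(1)$: from $|x_3|<1$ we get $|x_1|\le|\beta_1|+|\beta_2|<1$ and likewise $|x_2|<1$, while $x_1-\bar x_2x_3=\beta_1(1-|x_3|^2)$ and $|x_1|^2-|x_2|^2=(|\beta_1|^2-|\beta_2|^2)(1-|x_3|^2)$ collapse the inequality $2|x_1-\bar x_2x_3|<1+|x_1|^2-|x_2|^2-|x_3|^2$ to $|\beta_1|+|\beta_2|<1$; combined with $|x_1|<1$ this says $\psi(\overline\D)\subset\D$, i.e.\ $(1)$. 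Altogether $(1)\Leftrightarrow(2)\Leftrightarrow(3)\Leftrightarrow(5)$ and $(1)\Leftrightarrow(4)\Leftrightarrow(6)$.

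The individual steps are short, and I expect the real content to be the bridge in the second paragraph between the analytic form of $(1)$ and the inequalities $(2)$--$(4)$. The leverage is that a single step of Schur's algorithm leaves $\psi_1$ of linear-fractional type, so that its sublevel set is cut out by a \emph{two-term} inequality; making this rigorous needs (i) checking the Schur reduction is a genuine equivalence on all of $\overline\D$, strict inequalities included (where the compactness of $\partial\D$ enters), and (ii) the quadratic identities relating $|x_1-\bar x_2x_3|$, $|x_2-\bar x_1x_3|$, and $|x_1x_2-x_3|$ — the same identities that, together with a careful expansion of $I-A^{*}A\succ 0$, pin $(5)$ to $(3)$.
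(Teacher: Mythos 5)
First, a point of order: the paper does not prove this statement. It is quoted, with attribution, as Theorem~2.4 of Abouhajar--White--Young \cite{awy}, and only the equivalence $(1)\Leftrightarrow(5)$ is actually invoked later (in the proof of Theorem~\ref{tetra main}, to realize $\E$ as the image of the ball of symmetric matrices). So there is no in-paper proof to compare yours against; the natural benchmark is the original argument in \cite{awy}. Measured against that, your proposal is essentially correct and follows the same broad strategy: convert $(1)$ into the statement that the linear fractional map $\psi(z)=(x_2-zx_3)/(1-zx_1)$ sends $\overline\D$ into $\D$, extract $(2)$--$(4)$ from the geometry of images of discs under M\"obius maps, and handle $(5)$ and $(6)$ via the explicit symmetric matrix $\left(\begin{smallmatrix}x_1&b\\ b&x_2\end{smallmatrix}\right)$ with $b^2=x_1x_2-x_3$ and the explicit formulas for $\beta_1,\beta_2$. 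Your one genuinely different ingredient is the single Schur step: AWY in effect compute $\sup_{\overline\D}\abs{\psi}$ directly, whereas your $\psi_1$ reduces everything to a linear-fractional function with constant numerator whose sublevel condition is read off from the minimum modulus of the denominator, and deriving $(4)$ by adding the two circle-maximisation inequalities is a clean route to $(1)\Rightarrow(4)$. The remaining gaps are exactly the ones you flag, and they are real but routine: (i) the degenerate case $x_1x_2=x_3$, where $\psi_1\equiv 0$ lies in $\D$ unconditionally while $(2)$ reduces to $\abs{x_1}<1$, so the ``iff'' in your minimum-modulus step must be checked separately there (under $(1)$ it is rescued because $1-\bar\gamma\psi\ne 0$ on $\overline\D$ forces $1-\abs{x_2}^2>\abs{x_1-\bar x_2x_3}$); and (ii) in the direction $(2)\Rightarrow(1)$ one should note that the reconstruction $\psi=(z\psi_1+\gamma)/(1+\bar\gamma z\psi_1)$ itself certifies $1-zx_1\ne 0$ on $\overline\D$, so $\abs{x_1}<1$ need not be assumed in advance. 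With those details written out, the argument closes all six equivalences.
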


The tetrablock is useful in the study of the $\mu$-synthesis problem where the structure is given by the diagonal matrices $M$ in  $\C^{2\times 2}$.
By \cite{awy}*{Theorem 9.1},
\begin{equation}
    \mathbb{E} = \{ (a_{11}, a_{22}, \det A) \colon A=[a_{ij}] \in \mathbb C^{2\times 2}, \mu_M(A) < 1\}.
\end{equation}
The interpolation problem for the unbounded $4$-dimensional domain $\Sigma = \{ A \in \mathbb C^{2\times 2}: \mu_M(A) < 1\}$ is equivalent to the interpolation problem for the bounded $3$-dimensional domain $\mathbb E$ (see \cite[Theorem 9.3]{awy}). 

The tetrablock is not a circular domain (see \cite{awy}*{Theorem 2.12}) and hence also not Reinhardt.
It is not a Hartogs domain either; to see this, note  
that $(1,1,1) \in \overline{\E}$, but none of the points $(i,1,1), (1,i,1) $ and $(1,1,i)$ are in $\overline{\E}$.
So, we use a proper holomorphic map from a circular domain to $\E$ to study the Friedrichs operator on $\E$.

\begin{thm}\label{tetra main}
	The Friedrichs operator of $\E$ is of rank one.
\end{thm}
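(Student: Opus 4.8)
The plan is to deduce this directly from Theorem~\ref{main result} by realizing $\E$ as a proper image of a circular domain with homogeneous Jacobian. Motivated by characterization~(5) in Theorem~\ref{char tetra}, I would identify a symmetric matrix $A=[a_{ij}]$ with the point $(a_{11},a_{12},a_{22})\in\C^3$ and take
\[
D_1=\big\{(a_{11},a_{12},a_{22})\in\C^3\colon \lVert A\rVert<1\big\},\qquad
\phi(a_{11},a_{12},a_{22})=\big(a_{11},\,a_{22},\,a_{11}a_{22}-a_{12}^2\big)
\]
so that $\phi(A)=(a_{11},a_{22},\det A)$. The first step is to record the structural properties of $D_1$: it is open and convex, hence a domain; it is bounded, since the operator norm dominates each entry; it contains $0$; and it is circular, because $\lVert e^{i\theta}A\rVert=\lVert A\rVert$. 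By characterization~(5) in Theorem~\ref{char tetra}, $\phi(D_1)=\E$, so $\phi$ is a surjective holomorphic map between bounded domains in $\C^3$.

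Next I would compute the Jacobian: differentiating $\phi$ in the variables $(a_{11},a_{12},a_{22})$ gives the matrix with rows $(1,0,0)$, $(0,0,1)$, $(a_{22},-2a_{12},a_{11})$, so $J\phi=2a_{12}$, which is a homogeneous polynomial of degree $1$. In particular $J\phi\not\equiv 0$, and its zero set $\{a_{12}=0\}$ is the branch locus.

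The step that requires the most care is showing that $\phi$ is proper of finite multiplicity. For the multiplicity: if $A\in D_1$ and $\phi(A)=(x_1,x_2,x_3)$, then necessarily $a_{11}=x_1$, $a_{22}=x_2$, and $a_{12}^2=x_1x_2-x_3$, so the generic fibre has exactly two points and $m=2$. For properness I would prove that $\phi(\partial D_1)\cap\E=\emptyset$: if $A\in\partial D_1$ (so $\lVert A\rVert=1$) had $\phi(A)\in\E$, then by characterization~(5) there is a symmetric $B$ with $\lVert B\rVert<1$ and $\phi(B)=\phi(A)$; comparing entries forces $b_{12}=\pm a_{12}$, whence $B=A$ or $B=JAJ$ with $J=\operatorname{diag}(1,-1)$, and in either case $\lVert B\rVert=\lVert A\rVert=1$, a contradiction. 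Since $\phi$ extends polynomially to the compact set $\overline{D_1}$, it follows that the preimage of any compact $K\subset\E$ is a closed, bounded subset of $\C^3$ contained in $D_1$, hence compact; thus $\phi$ is proper.

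With all the hypotheses verified --- $D_1$ circular, $0\in D_1$, $\phi\colon D_1\to\E$ proper of multiplicity $2$, and $J\phi=2a_{12}$ a homogeneous polynomial --- Theorem~\ref{main result} applies and yields that the Friedrichs operator of $\E$ is of rank one. I expect the boundary analysis in the properness step to be the only real obstacle; the Jacobian computation and the elementary facts about $D_1$ are routine, and surjectivity of $\phi$ is exactly Theorem~\ref{char tetra}(5).
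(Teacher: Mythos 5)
Your proposal is correct and follows essentially the same route as the paper: the paper also takes $D_1$ to be the unit operator-norm ball of symmetric $2\times 2$ matrices viewed in $\C^3$ (written there explicitly as $S=\{|x|^2+|y|^2+2|z|^2<1+\abs{xy-z^2}\}$), uses the same map $(x,y,z)\mapsto(x,y,xy-z^2)$ with linear Jacobian, and invokes Theorem~\ref{main result}. The only difference is that you supply the verification of properness and multiplicity, which the paper asserts without proof; your boundary argument via conjugation by $\operatorname{diag}(1,-1)$ is sound.
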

\begin{proof}
Let $S$ be the unit ball, in the operator norm,  of symmetric complex $2\times 2$ matrices.
We view $S$ as a domain in $\C^3$:
\begin{equation}
    S= \{(x,y,z) \in \mathbb C^3 \colon |x|^2 + |y|^2 + 2|z|^2 < 1 + \abs{xy - z^2} \}.
\end{equation}
Note that $S$ is a circular domain containing the origin.
By Theorem \ref{char tetra}, we can write 
\begin{equation}
    \mathbb E = \{(x,y,xy- z^2) :  (x,y,z)\in S\}.
\end{equation}
Let $\Phi : S \rightarrow \mathbb E$ be defined by $\Phi (x,y,z) = (x,y,xy- z^2)$.
It is a proper holomorphic map of multiplicity two with Jacobian $J\Phi (x,y,z) = -2z$.
The conclusion follows from Theorem~\ref{main result} since the Jacobian is a homogeneous polynomial.
\end{proof}
\subsection{Pentablock}
The next domain we consider is the pentablock
	\begin{equation}\label{Penta}
	\mathcal{P}:=\{(a_{21},\operatorname{tr}(A), \operatorname{det}A)\in\C^3: A = [a_{ij}]\in\C^{2\times 2} , \|A\|<1\}.
	\end{equation}
Here are few alternate characterizations of $\mathcal{P}$.
\begin{thm}[{\cite[Theorem 1.1]{ALY15}}]\label{char penta}
	Let $(s,p)\in\mathbb{G}_2$ and $a\in\C$.
	The following are equivalent.
	\begin{enumerate}\itemsep10pt
		\item $(a,s,p)\in \mathcal{P}$.
		\item $|a|<\left|1- \dfrac{\frac{1}{2} s\bar\beta}{1+\sqrt{1-|\beta|^2}}\right|$, where $\beta=\dfrac{s-\bar sp}{1-|p|^2}$.
		\item $2|a| < |1-\bar\lambda_2\lambda_1|+ \sqrt{(1-|\lambda_1|^2)(1-|\lambda_2|^2)}$, where $\lambda_1,\lambda_2 \in\mathbb D$ and $(s,p)= (\lambda_1+\lambda_2,\lambda_1\lambda_2)$.  
	\end{enumerate}
\end{thm}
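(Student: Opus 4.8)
The plan is to prove the equivalence by regarding $\mathcal{P}$ as fibred over the symmetrized bidisc $\mathbb{G}_2$ through the projection $(a,s,p)\mapsto(s,p)$ and identifying each fibre explicitly. The geometric content lies entirely in (1)$\Leftrightarrow$(3): I will show that for $(s,p)\in\mathbb{G}_2$ the fibre $\{a:(a,s,p)\in\mathcal{P}\}$ is precisely the open disc of radius $\tfrac12\big(|1-\bar\lambda_2\lambda_1|+\sqrt{(1-|\lambda_1|^2)(1-|\lambda_2|^2)}\big)$, where $(s,p)=(\lambda_1+\lambda_2,\lambda_1\lambda_2)$. Writing $a_{21}$ for the $(2,1)$-entry of a matrix, this amounts to computing
\[
\sup\bigl\{|a_{21}|:A\in\C^{2\times2},\ \|A\|<1,\ \operatorname{tr}A=s,\ \det A=p\bigr\}.
\]
Once this is in hand, (2)$\Leftrightarrow$(3) reduces to the algebraic identity that the modulus appearing in (2) equals $\tfrac12$ times the right-hand side of (3), after substituting $\beta=(s-\bar sp)/(1-|p|^2)$ and $(s,p)=(\lambda_1+\lambda_2,\lambda_1\lambda_2)$.

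For the supremum I would first note that $(s,p)\in\mathbb{G}_2$ forces the eigenvalues $\lambda_1,\lambda_2$ into $\D$, so that $\delta:=\sqrt{(1-|\lambda_1|^2)(1-|\lambda_2|^2)}$ is a positive real. Every matrix with trace $s$ and determinant $p$ is unitarily similar, by Schur triangularization, to $T_c=\left(\begin{smallmatrix}\lambda_1&c\\0&\lambda_2\end{smallmatrix}\right)$, and a direct computation of $I-T_c^*T_c\ge0$ shows that $\|T_c\|\le1$ exactly when $|c|\le\delta$. Thus the set of admissible contractions is $\{WT_cW^*:W\in U(2),\ |c|\le\delta\}$. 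Since the corner entry is not unitarily invariant, the crux is to maximize $|(WT_cW^*)_{21}|=|\langle T_c u,v\rangle|$ over all ordered orthonormal bases $(u,v)$ and all $|c|\le\delta$. Parametrizing $W$ in $SU(2)$, the $(2,1)$-entry takes the schematic form $(\lambda_2-\lambda_1)\alpha\gamma-c\gamma^2$ with $|\alpha|^2+|\gamma|^2=1$; aligning the phase of $c$ and maximizing the resulting real expression in the variable $|\gamma|^2$ yields $\tfrac12\big(\sqrt{|\lambda_1-\lambda_2|^2+\delta^2}+\delta\big)$.

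The final step is the identity $|\lambda_1-\lambda_2|^2+(1-|\lambda_1|^2)(1-|\lambda_2|^2)=|1-\bar\lambda_2\lambda_1|^2$, which rewrites the supremum as exactly $\tfrac12\big(|1-\bar\lambda_2\lambda_1|+\delta\big)$. Because the maximum is attained only at the boundary $|c|=\delta$ (where $\|T_c\|=1$), the strict inequality $\|A\|<1$ yields the open disc; conversely, any $a$ with $|a|<\tfrac12(|1-\bar\lambda_2\lambda_1|+\delta)$ is realized by choosing $|c|<\delta$, the optimizing $W$, and a diagonal phase conjugation to fix the argument of $a_{21}$. This establishes (1)$\Leftrightarrow$(3), and substituting $\beta$ into the algebraic identity gives (2)$\Leftrightarrow$(3).

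I expect the main obstacle to be precisely the failure of $a_{21}$ to be unitarily invariant: one cannot simply read the corner entry off the Schur form, so the optimization must be carried out over the full coupled family $(W,c)$ rather than over $c$ alone. Some care is then needed to verify that letting $|c|$ decrease below $\delta$ sweeps out all smaller fibre radii continuously (so that the fibre is genuinely the full open disc and not an annulus or a punctured disc), and to track the strict-versus-nonstrict norm inequality so that the open domain $\mathcal{P}$ corresponds to the open fibre.
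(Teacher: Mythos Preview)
The paper does not prove this theorem at all: it is quoted verbatim from \cite[Theorem~1.1]{ALY15} and used only as input to Theorem~\ref{penta main}. So there is no ``paper's own proof'' to compare against; you have supplied an independent argument where the paper simply cites the literature.

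That said, your outline for (1)$\Leftrightarrow$(3) is sound. The Schur parametrization $A=WT_cW^*$ with $T_c=\bigl(\begin{smallmatrix}\lambda_1&c\\0&\lambda_2\end{smallmatrix}\bigr)$ and the contractivity criterion $|c|\le\delta=\sqrt{(1-|\lambda_1|^2)(1-|\lambda_2|^2)}$ are standard; your computation of the $(2,1)$-entry as $\gamma\bar\alpha(\lambda_1-\lambda_2)-\gamma^2c$ and the subsequent one-variable optimization in $t=|\gamma|^2$ do give the maximum $\tfrac12\bigl(\sqrt{|\lambda_1-\lambda_2|^2+\delta^2}+\delta\bigr)$, and the identity $|\lambda_1-\lambda_2|^2+\delta^2=|1-\bar\lambda_2\lambda_1|^2$ converts this into the right-hand side of (3). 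The strict/non-strict bookkeeping is handled correctly: the supremum is attained only at $|c|=\delta$ (norm $=1$), while every smaller modulus is realized with $|c|<\delta$ by continuity in $(t,|c|)$ together with diagonal phase conjugation for the argument of $a$. One small point to make explicit in a full write-up: when covering the entire interval $[0,R)$ of radii you should let both $t$ and $|c|$ vary (e.g.\ $t=1$ gives $|a_{21}|=|c|\in[0,\delta)$, and interpolating to the optimal $t$ fills in $[\delta,R)$), rather than fixing $W$ at the optimizer and varying $c$ alone.

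For (2)$\Leftrightarrow$(3) you have only asserted the algebraic identity without verification. In the original \cite{ALY15} this equivalence is not a one-line substitution; it involves recognizing the quantity in (2) as the distance from $\tfrac12 s$ to the boundary of a certain disc automorphism image. If you intend this as a self-contained proof you should either carry out that computation or reduce it to a known formula.
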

Similar to the tetrablock, the pentablock is also related to the $\mu$-synthesis problem where the structure is given by the upper triangular matrices $N$ in $\C^{2\times 2}$.
Additionally, by \cite[Theorem 5.2]{ALY15}, the pentablock can also be characterized as follows.
\begin{equation}
\mathcal{P}=\{(a_{21},\operatorname{tr}(A), \operatorname{det}A): A = [a_{ij}]\in\C^{2\times 2}, \mu_N(A) < 1\}.
\end{equation}

The pentablock is a Hartogs domain over the symmetrized bidisc $\mathbb G_2$.
But $\mathbb G_2$ itself is not a Hartogs domain (see \cite{Krantz-Chen}*{Proposition $6.3$}). It is easy to check that the pentablock is not a Reinhardt domain: $(0,2,1) \in \overline{\mathcal P}$, but $(0,2i, 1) \notin \overline{\mathcal P}$.

\begin{thm}\label{penta main}
	The Friedrichs operator of $\mathcal P$ is of rank one.
\end{thm}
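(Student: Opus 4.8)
The plan is to exhibit the pentablock as the image of a circular domain under a proper holomorphic map of finite multiplicity whose Jacobian is a homogeneous polynomial, and then invoke Theorem~\ref{main result}. Guided by the proof of Theorem~\ref{tetra main}, the natural candidate for the covering domain is (a copy in $\C^3$ of) the set of $2\times 2$ matrices $A=[a_{ij}]$ with $\|A\|<1$, parametrized by the three entries that actually appear in the definition of $\mathcal P$ — namely $a_{21}$ (which becomes the first pentablock coordinate) together with two further parameters whose images under the map are $\operatorname{tr}(A)=a_{11}+a_{22}$ and $\det A=a_{11}a_{22}-a_{12}a_{21}$. Concretely, I would first write down the domain
\[
W=\{(a,b,c)\in\C^3\colon \|A(a,b,c)\|<1\},\qquad A(a,b,c)=\begin{pmatrix} b & c\\ a & 0\end{pmatrix}\ \text{(or a similar normal form)},
\]
verify that the operator-norm condition $\|A\|<1$ translates into an inequality that is invariant under $(a,b,c)\mapsto(e^{i\theta}a,e^{i\theta}b,e^{i\theta}c)$ — so that $W$ is circular and contains the origin — and check that $0\in W$. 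The first substantive step is thus: pin down the right three-parameter family of matrices and confirm circularity of $W$ about $0$.

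Next I would define $\Psi\colon W\to\mathcal P$ by $\Psi(a,b,c)=(a_{21},\operatorname{tr}A,\det A)$ in these coordinates, i.e.\ something of the shape $\Psi(a,b,c)=(a,\,b,\,-ac)$ after choosing coordinates so that $a_{11}=0$ — or more generally $\Psi(a,b,c)=(a_{21}(a,b,c),\ \operatorname{tr},\ \det)$; surjectivity onto $\mathcal P$ is exactly the defining representation \eqref{Penta}. I would then check that $\Psi$ is proper (the preimage of a compact set is compact, using $\|A\|<1$ as a bounded condition and that $\overline W$ maps into $\overline{\mathcal P}$) and of finite multiplicity, computing the multiplicity $m$ from the degree of the polynomial map — with the $a_{11}=0$ normalization the generic fibre of $(a,b,c)\mapsto(a,b,-ac)$ has size one, so I would instead want the honest family where all matrices of norm $<1$ with prescribed $(a_{21},\operatorname{tr},\det)$ are hit, which should give $m=2$ as for the tetrablock (the two matrices related by the natural symmetry). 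The key computation here is the Jacobian: $J\Psi$ must be a homogeneous polynomial in $(a,b,c)$. For the tetrablock the analogous Jacobian was $-2z$, linear; I expect $J\Psi$ here to again be a single monomial or a low-degree homogeneous polynomial, and verifying homogeneity of $J\Psi$ is the crux.

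With $W$ circular containing $0$, $\Psi$ proper of finite multiplicity, and $J\Psi$ homogeneous, Theorem~\ref{main result} immediately gives that the Friedrichs operator of $\mathcal P$ has rank one, completing the proof. The main obstacle I anticipate is purely the \emph{bookkeeping of the normal form}: choosing the three-parameter slice of $\{\|A\|<1\}$ so that simultaneously (i) the slice is circular about the origin, (ii) the map to $(a_{21},\operatorname{tr},\det)$ is onto $\mathcal P$ with every pentablock point attained, (iii) the multiplicity is finite, and (iv) the Jacobian comes out homogeneous. If a literal entrywise normal form does not satisfy all four at once, the fallback is to take $W$ to be the full unit ball of $\C^{2\times 2}$ viewed as a circular domain in $\C^4$, compose with the proper map $(a_{11},a_{12},a_{21},a_{22})\mapsto(a_{21},a_{11}+a_{22},a_{11}a_{22}-a_{12}a_{21})$, and check that this composition is proper of finite multiplicity with homogeneous (here polynomial and in fact homogeneous of degree $0$ in part — one must be careful) Jacobian; but $n$ must match on both sides in Theorem~\ref{main result}, so the three-parameter slice is the approach that keeps the dimensions equal and is the one I would push through.
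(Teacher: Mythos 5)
Your overall strategy --- realize $\mathcal P$ as the image of a circular domain containing $0$ under a proper holomorphic map of finite multiplicity with homogeneous Jacobian, then quote Theorem~\ref{main result} --- is exactly the paper's strategy. But the proof is not done until a specific covering domain and map are exhibited and verified, and that is precisely the step your proposal leaves open. The concrete candidate you write down fails: with the normal form $a_{11}=0$, $A(a,b,c)=\left(\begin{smallmatrix} b & c\\ a & 0\end{smallmatrix}\right)$, the induced map is $(a,b,c)\mapsto(a,b,-ac)$, which (as you yourself observe) is generically injective, so it cannot be a finite covering of multiplicity $\ge 2$; worse, it is not surjective onto $\mathcal P$, since a point such as $(0,0,1/2)\in\mathcal P$ cannot be written as $(a,b,-ac)$ with $a=0$. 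Your fallback of using the full unit ball of $\C^{2\times2}$ fails for the dimension reason you note (the fibres of $A\mapsto(a_{21},\operatorname{tr}A,\det A)$ are positive-dimensional, so the map is not proper of finite multiplicity). Thus items (ii) and (iii) of your own checklist --- surjectivity and finite multiplicity --- are exactly what is missing, and they are the crux.

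The paper's construction does not come from a matrix slice at all but from the eigenvalue characterization, Theorem~\ref{char penta}(3): writing $(s,p)=(\lambda_1+\lambda_2,\lambda_1\lambda_2)$ with $\lambda_1,\lambda_2\in\D$, the point $(a,s,p)$ lies in $\mathcal P$ if and only if $2|a|<|1-\bar\lambda_2\lambda_1|+\sqrt{(1-|\lambda_1|^2)(1-|\lambda_2|^2)}$. This immediately yields the covering domain
\begin{equation*}
\mathcal L=\Big\{(x,y,z)\in\C^3\colon 2|z|<|1-x\bar y|+\sqrt{(1-|x|^2)(1-|y|^2)}\Big\},
\end{equation*}
which is circular (the quantity $x\bar y$ and the moduli $|x|$, $|y|$, $|z|$ are unchanged under simultaneous rotation of all three coordinates) and contains $0$, together with the map $\Psi(x,y,z)=(z,x+y,xy)$, a proper $2$-to-$1$ covering whose deck transformation swaps $x$ and $y$, with Jacobian $J\Psi=x-y$, homogeneous of degree one. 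In other words, the right move is to symmetrize the $\G_2$ base of the Hartogs-type fibration of $\mathcal P$ and carry the fibre coordinate along untouched, rather than to slice the matrix ball. If you insist on a matrix picture, you would additionally have to prove that your chosen three-parameter family of matrices still covers every point of $\mathcal P$ with finite multiplicity --- a nontrivial claim your proposal does not establish.
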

\begin{proof}
Consider the circular domain
\begin{equation}
\mathcal L= \big\{(x,y,z) \in \mathbb C^3 \colon 2|z| < |1-x\bar y|+ \sqrt{(1-|x|^2)(1-|y|^2)} \big\}.
\end{equation}
Let $\Psi\colon\mathcal{L}\rightarrow\mathcal{P}$ be defined by $\Psi (x,y,z) = (z, x+y,xy)$.
It is easy to see that $\Psi$ is a proper holomorphic covering map with multiplicity two.
The Jacobian of $\Psi$, $J \Psi = x-y$, is a homogeneous polynomial.
Then, Theorem~\ref{main result} gives us the result.
\end{proof}

\subsection{Symmetrized polydisc}
The last domain we consider is the symmetrized polydisc $\mathbb G_n \subset\C^n$, a generalization of $\G_2$ to higher dimensions.
$\G_n$ is defined to be the image of the symmetrization map $\pi_n\colon\D^n\to\C^n$ defined by
	 \begin{equation}\label{pi_n}
	 \pi_n(z_1,\dots, z_n) = \left(\sum_{1\leq i\leq n} z_i,\sum_{1\leq
	 	i<j\leq n}z_iz_j,\dots, \prod_{i=1}^n z_i \right).
	 \end{equation}
However, the complex geometry and operator theoretic properties of $\G_n$, $n\ge 3$, is starkly different from those of $\G_2$ (see \cite{NPZ07}).

The symmetrized polydisc is associated with spectral interpolation and hence with the $\mu$-synthesis problem.
For $A\in\C^{n\times n}$,
the spectral radius $r(A)<1$ if and only if $\pi_n(\lambda_1,\dots, \lambda_n) \in \G_n$, where $\lambda_1, \dots , \lambda_n$ are
the eigenvalues of $A$ counted with multiplicities.
In fact, the interpolation problem into the spectral unit ball in $\mathbb C^{n \times n}$ is equivalent to the interpolation problem into $\mathbb G_n$ (see \cite{costara05}*{Theorem 2.1}).

We collect a few important characterizations of $\mathbb G_n$ in the following theorem.
More characterizations of $\mathbb G_n$ can be found in \cite{pal-roy-ijm}.
\begin{thm}[{\cite{costara05}}]\label{char symm poly}
		For $(s_1,\ldots, s_n) \in \mathbb C^n$, the following are equivalent.
		\begin{enumerate}
			\item $(s_1,\ldots, s_n) \in \mathbb G_n$.
			\item $\sup_{|z|\leq 1}|f_s(z)| < 1$, where 
			\[
			f_s(z)= \dfrac{n(-1)^n s_n z^{n-1} + (n-1)(-1)^{n-1} s_{n-1} z^{n-2} +\cdots + (-s_1)}{n - (n-1)s_1 z + \cdots + (-1)^{n-1}s_{n-1} z^{n-1}}.
			\]
			\item $|s_n|<1$ and there exists $(\beta_1 , \ldots, \beta_{n-1}) \in \mathbb G_{n-1}$ such that $s_j = \beta_j + \bar{\beta}_{n-j}s_n$ for $j=1, \ldots, n-1$.
		\end{enumerate}
	\end{thm}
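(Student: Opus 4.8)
The plan is to turn the statement into a question about the location of the roots of a polynomial and then establish $(1)\Leftrightarrow(2)$ and $(1)\Leftrightarrow(3)$ separately. Writing $p_s(z)=z^n-s_1z^{n-1}+\cdots+(-1)^ns_n=\prod_{i=1}^n(z-\lambda_i)$, the tuple $(s_1,\dots,s_n)$ lies in $\mathbb{G}_n=\pi_n(\mathbb{D}^n)$ exactly when every root $\lambda_i$ lies in $\mathbb{D}$. I would also use the reciprocal polynomial $q_s(z)=z^np_s(1/z)=\prod_{i=1}^n(1-\lambda_iz)=\sum_{k=0}^n(-1)^ks_kz^k$ (with $s_0=1$), whose roots are the $1/\lambda_i$. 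A direct computation identifies the numerator of $f_s$ with $q_s'$ and its denominator with $nq_s-zq_s'$, so $f_s=q_s'/(nq_s-zq_s')$; equivalently, setting $w=1/z$, one gets the clean identity $f_s(z)+w=n\,p_s(w)/p_s'(w)$, i.e. $(f_s(z)+w)^{-1}=\tfrac1n\sum_{i=1}^n(w-\lambda_i)^{-1}$. I would record these reformulations first, since every later step reads off one of them.

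For $(1)\Rightarrow(2)$, assume all $\lambda_i\in\mathbb{D}$. First $f_s$ is holomorphic on $\overline{\mathbb{D}}$: its poles are reciprocals of the critical points of $p_s$, and by Gauss--Lucas those critical points lie in the convex hull of $\{\lambda_i\}\subset\mathbb{D}$, so the poles lie in $\{\abs{z}>1\}$. On $\abs{z}=1$ we have $w=\bar z$, $\abs{w}=1$, and the identity gives $\tfrac{w}{f_s(z)+w}=\tfrac1n\sum_i\tfrac{w}{w-\lambda_i}$. The elementary equivalence $\operatorname{Re}\tfrac{1}{1+\eta}>\tfrac12\Leftrightarrow\abs{\eta}<1$ (a one-line computation), applied with $\eta=-\lambda_i\bar w$, shows each summand has real part $>\tfrac12$, hence so does the average; applying the same equivalence in the other direction with $\eta=f_s/w$ yields $\abs{f_s}<\abs{w}=1$ on $\abs{z}=1$, and the maximum principle gives $\sup_{\abs{z}\le1}\abs{f_s}=\max_{\abs{z}=1}\abs{f_s}<1$. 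I expect this direction to be the main obstacle: the naive triangle inequality fails because $f_s$ is an \emph{affine} (not convex) combination of the $-\lambda_i$ with complex weights, so one cannot bound $\abs{f_s}$ termwise. The resolution is precisely to pass from moduli to real parts via $\operatorname{Re}\tfrac{1}{1+\eta}>\tfrac12\Leftrightarrow\abs{\eta}<1$, which linearizes the estimate and lets the averaging go through.

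For $(2)\Rightarrow(1)$, assume $\sup_{\abs{z}\le1}\abs{f_s}=M<1$. Then $f_s$ is holomorphic on $\overline{\mathbb{D}}$ and $\abs{zf_s}\le M<1$ there, so $1+zf_s$ has positive real part and never vanishes on $\overline{\mathbb{D}}$. I would count the zeros of $q_s$ in $\mathbb{D}$ by the argument principle, using the sub-identity $q_s'/q_s=\tfrac{nf_s}{1+zf_s}$: writing $g=zf_s$ and $\tfrac{g}{1+g}=1-\tfrac{1}{1+g}$, the integral $\tfrac{1}{2\pi i}\oint_{\abs{z}=1}\tfrac{q_s'}{q_s}\,dz$ collapses to $n-n\operatorname{Res}_{z=0}\tfrac{1}{z(1+g)}=n-n=0$, since $g(0)=0$. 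Thus $q_s$ has no zeros in $\mathbb{D}$, and a short boundary analysis (a zero $z_0$ of $q_s$ with $\abs{z_0}=1$ would force $f_s(z_0)=-1/z_0$, of modulus $1$) rules out zeros on $\abs{z}=1$ as well. Hence all roots of $q_s$ lie in $\{\abs{z}>1\}$, so all $\lambda_i\in\mathbb{D}$, giving $(s_1,\dots,s_n)\in\mathbb{G}_n$.

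For $(1)\Leftrightarrow(3)$, I would recognize the relation $s_j=\beta_j+\bar\beta_{n-j}s_n$ as exactly one step of the Schur--Cohn recursion for root location. In reciprocal form it reads $q_s=\hat q+(-1)^ns_n\,z\,\hat q^{*}$, where $\hat q(z)=\sum_{j=0}^{n-1}(-1)^j\beta_jz^j$ corresponds to $(\beta_1,\dots,\beta_{n-1})$ and $\hat q^{*}$ is its degree-$(n-1)$ conjugate-reciprocal; solving gives $\hat q=(q_s-(-1)^ns_nq_s^{*})/(1-\abs{s_n}^2)$. The classical fact that the degree-$n$ polynomial $q_s$ has all roots in $\{\abs{z}>1\}$ iff $\abs{s_n}<1$ and the reduced degree-$(n-1)$ polynomial $\hat q$ has all roots in $\{\abs{z}>1\}$ then yields $(1)\Leftrightarrow(3)$ by induction on $n$, with base case $n=1$ (where $\mathbb{G}_1=\mathbb{D}$ and the relations are vacuous). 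The points needing care here are verifying the algebraic identity matching $\hat q$ to the Schur reduction and checking the reflection-coefficient step $\abs{s_n}<1$, both of which are routine once the reciprocal-polynomial dictionary is in place.
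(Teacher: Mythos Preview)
The paper does not supply its own proof of this theorem: it is quoted verbatim from Costara \cite{costara05} as a known characterization of $\mathbb{G}_n$, with no argument given. So there is no in-paper proof to compare your proposal against.

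That said, your sketch is correct and is essentially the classical route (and is in the spirit of Costara's original argument). The identifications $f_s=q_s'/(nq_s-zq_s')$ and $(f_s(z)+w)^{-1}=\tfrac1n\sum_i(w-\lambda_i)^{-1}$ for $w=1/z$ check out, and they make both halves of $(1)\Leftrightarrow(2)$ transparent: Gauss--Lucas clears the poles of $f_s$ from $\overline{\mathbb{D}}$, the equivalence $\operatorname{Re}\tfrac{1}{1+\eta}>\tfrac12\Leftrightarrow|\eta|<1$ linearizes the boundary estimate, and for the converse the logarithmic-derivative identity $q_s'/q_s=nf_s/(1+zf_s)$ feeds directly into the argument principle. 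For $(1)\Leftrightarrow(3)$ you correctly recognize one step of the Schur--Cohn recursion; note also that the relations $s_j=\beta_j+\bar\beta_{n-j}s_n$ together with $|s_n|<1$ force $\beta_j=(s_j-\bar s_{n-j}s_n)/(1-|s_n|^2)$, so the ``there exists'' in (3) is automatically a uniqueness, which makes the $(3)\Rightarrow(1)$ direction go through without ambiguity. One small correction: take the induction base at $n=2$ (where (3) becomes the familiar $\mathbb{G}_2$ characterization $|s_2|<1$, $s_1=\beta+\bar\beta s_2$ with $\beta\in\mathbb{D}$), since $\mathbb{G}_0$ is not defined.
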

	
Chen, Krantz, and Yuan \cite{Krantz-Chen} have shown that the Friedrichs operator on $\G_2$ is of rank one (see \cite{Krantz-Chen}).
We now show that the same holds for all $\G_n$.
\begin{thm}\label{symm main}
	The Friedrichs operator of $\mathbb G_n$ is of rank one.
\end{thm}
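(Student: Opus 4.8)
The plan is to repeat, in the cleanest possible way, the strategy used for the tetrablock and pentablock: exhibit a circular domain containing the origin that covers $\mathbb{G}_n$ by a proper holomorphic map of finite multiplicity whose Jacobian is a homogeneous polynomial, and then quote Theorem~\ref{main result}. Here the covering domain is forced on us by the very definition of $\mathbb{G}_n$: it is the polydisc $\mathbb{D}^n$, which is Reinhardt, hence circular, and contains $0$; and the covering map is the symmetrization map $\pi_n\colon\mathbb{D}^n\to\mathbb{G}_n$ of \eqref{pi_n}, which by definition is holomorphic and surjective onto $\mathbb{G}_n$.

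First I would verify that $\pi_n$ is a proper holomorphic map of finite multiplicity. Since $\pi_n$ is a polynomial map it extends holomorphically past $\overline{\mathbb{D}^n}$; properness amounts to the statement that $\pi_n^{-1}(K)$ is compact for every compact $K\subset\mathbb{G}_n$, which follows because $\pi_n\big(\partial\mathbb{D}^n\big)\cap\mathbb{G}_n=\emptyset$ (a point of $\mathbb{G}_n$ has all its $\pi_n$-preimages inside $\mathbb{D}^n$). The symmetric group $S_n$ acts on $\mathbb{D}^n$ by permuting coordinates, $\pi_n$ is invariant under this action, and over a point whose preimage has pairwise distinct coordinates the fiber consists of exactly $n!$ points; hence $\pi_n$ has multiplicity $m=n!$. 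Next I would compute $J\pi_n$. Writing $\pi_n=(e_1,\dots,e_n)$ with $e_k$ the $k$-th elementary symmetric polynomial, one has $\partial e_k/\partial z_j=e_{k-1}(z_1,\dots,\widehat{z_j},\dots,z_n)$, and the classical evaluation of the resulting determinant gives
\begin{equation}
J\pi_n(z_1,\dots,z_n)=\pm\prod_{1\le i<j\le n}(z_i-z_j),
\end{equation}
the Vandermonde determinant, which is a homogeneous polynomial of degree $\binom{n}{2}$. Thus $D_1=\mathbb{D}^n$, $D_2=\mathbb{G}_n$, $\phi=\pi_n$ satisfy all the hypotheses of Theorem~\ref{main result}, and the theorem gives that the Friedrichs operator of $\mathbb{G}_n$ has rank one.

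I do not expect a serious obstacle; the only step needing a short argument is the identification of $J\pi_n$ with the Vandermonde determinant. If one prefers to avoid that computation, it suffices to observe that $J\pi_n$ is a polynomial (being a determinant of polynomial entries) and that it is homogeneous: scaling $z\mapsto\lambda z$ gives $\pi_n(\lambda z)=\big(\lambda e_1,\lambda^2 e_2,\dots,\lambda^n e_n\big)$, so $D\pi_n(\lambda z)=\lambda^{-1}\operatorname{diag}(\lambda,\lambda^2,\dots,\lambda^n)\,D\pi_n(z)$, and taking determinants yields $J\pi_n(\lambda z)=\lambda^{\binom{n}{2}}J\pi_n(z)$. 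Either way, $J\pi_n$ is a homogeneous polynomial, so Theorem~\ref{main result} applies and the proof is complete.
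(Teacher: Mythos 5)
Your proposal is correct and follows exactly the paper's argument: cover $\mathbb{G}_n$ by the circular domain $\mathbb{D}^n$ via the symmetrization map $\pi_n$, note it is proper of multiplicity $n!$ with Jacobian the Vandermonde determinant (a homogeneous polynomial), and invoke Theorem~\ref{main result}. The extra scaling argument you give for the homogeneity of $J\pi_n$ is a nice touch but not needed beyond what the paper states.
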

\begin{proof}
The symmetrization map $\pi_n$ is a proper holomorphic covering map with multiplicity $n!$ and Jacobian $J \pi_n = \prod_{1\leq j<k \leq n}^{}(z_j - z_k)$.
Since the Jacobian is a homogeneous polynomial, we are done by Theorem~\ref{main result}.
\end{proof}

A generalization of the symmetrized polydisc called the {\it extended symmetrized polydisc} was introduced by the second author and Pal \cite{pal-roy-ijm}.
These domains are useful in studying the Schwarz lemma for $\mathbb G_n$ (see \cite{pal-roy-caot}) and are related to the $\mu$-synthesis problem as well (see \cite{Roy22}).
The extended symmetrized polydisc $\widetilde{\mathbb G}_n$, $n\ge 2$, is defined as follows.
\begin{align}
	\widetilde{\mathbb G}_n := \Bigg\{ (y_1,\dots,y_{n-1}, q)\in
	\C^n \colon q \in \mathbb D,  y_j = \beta_j + \bar \beta_{n-j} q, \beta_j \in \mathbb C &\text{ and }\\
	|\beta_j|+ |\beta_{n-j}| < {n \choose j} &\text{ for } j=1,\dots, n-1 \Bigg\}.\notag
\end{align}
Note that $\widetilde{\mathbb G}_2 = \mathbb G_2$, and $\mathbb G_n \subsetneq \widetilde{\mathbb G}_n$ for $n\ge 3$.
However, $\widetilde{\mathbb G}_3$ is linearly isomorphic to the tetrablock $\mathbb E$.
Consequently, the Friedrichs operators of $\widetilde{\G}_2$  and $\widetilde{\G}_3$ are of rank one.
It would be interesting to see if the Friedrichs operator continues to have rank one on $\widetilde{\G}_n$ for $n\ge 4$.
	

\section*{Acknowledgements}
The authors would like to thank the anonymous referee for pointing out a subtle typo in \eqref{eqn:FSandEucVol}.
The main part of the work reported in this article was done when the second author was a postdoctoral fellow in TIFR CAM.
The second author would like to thank Professor Shyam Sundar Ghoshal for partially funding his position at TIFR CAM from his Inspire faculty-research grant DST/INSPIRE/04/2016/000237.

\def\MR#1{\relax\ifhmode\unskip\spacefactor3000 \space\fi%
  \href{http://www.ams.org/mathscinet-getitem?mr=#1}{MR#1}}

\begin{bibdiv}
\begin{biblist}
\bib{awy}{article}{
				author={Abouhajar, A. A.},
				author={White, M. C.},
				author={Young, N. J.},
				title={A Schwarz lemma for a domain related to $\mu$-synthesis},
				journal={J. Geom. Anal.},
				volume={17},
				date={2007},
				number={4},
				pages={717--750},
				issn={1050-6926},
				review={\MR{2365665}},
			}
			
			\bib{ALY15}{article}{
				author={Agler, Jim},
				author={Lykova, Zinaida A.},
				author={Young, N. J.},
				title={The complex geometry of a domain related to $\mu$-synthesis},
				journal={J. Math. Anal. Appl.},
				volume={422},
				date={2015},
				number={1},
				pages={508--543},
				issn={0022-247X},
				review={\MR{3263473}},
			}
			
			\bib{AY99}{article}{
				author={Agler, J.},
				author={Young, N. J.},
				title={A commutant lifting theorem for a domain in $\bold C^2$ and
					spectral interpolation},
				journal={J. Funct. Anal.},
				volume={161},
				date={1999},
				number={2},
				pages={452--477},
				issn={0022-1236},
				review={\MR{1674635}},
			}
			
			\bib{AY2000}{article}{
				author={Agler, J.},
				author={Young, N. J.},
				title={The two-point spectral Nevanlinna-Pick problem},
				journal={Integral Equations Operator Theory},
				volume={37},
				date={2000},
				number={4},
				pages={375--385},
				issn={0378-620X},
				review={\MR{1780117}},
			}

			\bib{Azukawa}{article}{
				author={Azukawa, Kazuo},
				title={Square-integrable holomorphic functions on a circular domain in
					$C^n$},
				journal={Tohoku Math. J. (2)},
				volume={37},
				date={1985},
				number={1},
				pages={15--26},
				issn={0040-8735},
				review={\MR{778368}},
			}
			
			\bib{BFT90}{article}{
				author={Bercovici, Hari},
				author={Foias, Ciprian},
				author={Tannenbaum, Allen},
				title={Structured interpolation theory},
				conference={
					title={Extension and interpolation of linear operators and matrix
						functions},
				},
				book={
					series={Oper. Theory Adv. Appl.},
					volume={47},
					publisher={Birkh\"{a}user, Basel},
				},
				date={1990},
				pages={195--220},
				review={\MR{1120276}},
			}
			
			\bib{Krantz-Chen}{article}{
				author={Chen, Liwei},
				author={Krantz, Steven G.},
				author={Yuan, Yuan},
				title={$L^p$ regularity of the Bergman projection on domains covered by
					the polydisc},
				journal={J. Funct. Anal.},
				volume={279},
				date={2020},
				number={2},
				pages={108522, 20},
				issn={0022-1236},
				review={\MR{4088498}},
			}
			
			\bib{costara05}{article}{
				author={Costara, Constantin},
				title={On the spectral Nevanlinna-Pick problem},
				journal={Studia Math.},
				volume={170},
				date={2005},
				number={1},
				pages={23--55},
				issn={0039-3223},
				review={\MR{2142182}},
			}

			\bib{doyle}{article}{
				author={Doyle, John},
				title={Analysis of feedback systems with structured uncertainties},
				journal={Proc. IEE-D},
				volume={129},
				date={1982},
				number={6},
				pages={242--250},
				issn={0143-7054},
				review={\MR{685109}},
			}
			
			\bib{Friedrichs}{article}{
				author={Friedrichs, Kurt},
				title={On certain inequalities and characteristic value problems for
					analytic functions and for functions of two variables},
				journal={Trans. Amer. Math. Soc.},
				volume={41},
				date={1937},
				number={3},
				pages={321--364},
				issn={0002-9947},
				review={\MR{1501907}},
			}
			
			\bib{GM13}{article}{
				author={Misra, Gadadhar},
				author={Shyam Roy, Subrata},
				author={Zhang, Genkai},
				title={Reproducing kernel for a class of weighted Bergman spaces on the
					symmetrized polydisc},
				journal={Proc. Amer. Math. Soc.},
				volume={141},
				date={2013},
				number={7},
				pages={2361--2370},
				issn={0002-9939},
				review={\MR{3043017}},
			}
			
			\bib{NPZ07}{article}{
				   author={Nikolov, Nikolai},
				   author={Pflug, Peter},
				   author={Zwonek, W\l odzimierz},
				   title={The Lempert function of the symmetrized polydisc in higher
				   dimensions is not a distance},
				   journal={Proc. Amer. Math. Soc.},
				   volume={135},
				   date={2007},
				   number={9},
				   pages={2921--2928},
				   issn={0002-9939},
				   review={\MR{2317970}},
			}
			
			\bib{pal-roy-ijm}{article}{
				author={Pal, Sourav},
				author={Roy, Samriddho},
				title={Characterizations of the symmetrized polydisc via another family
					of domains},
				journal={Internat. J. Math.},
				volume={32},
				date={2021},
				number={6},
				pages={Paper No. 2150036, 29},
				issn={0129-167X},
				review={\MR{4270688}},
			}
			
		    \bib{pal-roy-caot}{article}{
				   author={Pal, Sourav},
				   author={Roy, Samriddho},
				   title={A Schwarz lemma for the symmetrized polydisc via estimates on
				   another family of domains},
				   journal={Complex Anal. Oper. Theory},
				   volume={16},
				   date={2022},
				   number={5},
				   pages={Paper No. 66, 41},
				   issn={1661-8254},
				   review={\MR{4439983}},
			}
			
			\bib{RZ}{article}{
				author={Ravisankar, Sivaguru},
				author={Zeytuncu, Yunus E.},
				title={A note on smoothing properties of the Bergman projection},
				journal={Internat. J. Math.},
				volume={27},
				date={2016},
				number={11},
				pages={1650087, 10},
				issn={0129-167X},
				review={\MR{3570372}},
			}
			
			\bib{Roy22}{article}{
				author={Roy, Samriddho},
				title={Interpolating functions for a family of domains related to
					$\mu$-synthesis},
				journal={Proc. Indian Acad. Sci. Math. Sci.},
				volume={132},
				date={2022},
				number={2},
				pages={Paper No. 43},
				issn={0253-4142},
				review={\MR{4447260}},
			}
			
			\bib{Shapiro}{book}{
				author={Shapiro, Harold S.},
				title={The Schwarz function and its generalization to higher dimensions},
				series={University of Arkansas Lecture Notes in the Mathematical
					Sciences},
				volume={9},
				note={A Wiley-Interscience Publication},
				publisher={John Wiley \& Sons, Inc., New York},
				date={1992},
				pages={xiv+108},
				isbn={0-471-57127-X},
				review={\MR{1160990}},
			}
			
	\bib{Try13}{article}{
		author={Trybu\l a, Maria},
		title={Proper holomorphic mappings, Bell's formula, and the Lu Qi-Keng
			problem on the tetrablock},
		journal={Arch. Math. (Basel)},
		volume={101},
		date={2013},
		number={6},
		pages={549--558},
		issn={0003-889X},
		review={\MR{3133729}},
	}

\end{biblist}
\end{bibdiv}

\end{document}